\documentclass[twocolumn]{autart}
\usepackage{bm}
\usepackage{graphics}
\usepackage{epsfig}
\usepackage{times}
\usepackage{amsmath}
\usepackage{amssymb}
\usepackage{graphicx}
\usepackage{epstopdf}
\usepackage{epsfig}
\usepackage{enumerate}
\usepackage[noadjust]{cite}
\usepackage{color}
\usepackage{makecell}
\usepackage{arydshln}
\usepackage{stmaryrd}
\usepackage{stackengine}
\usepackage{threeparttable}
\usepackage{algorithm}
\usepackage{algorithmicx}
\usepackage{xpatch}
\usepackage{bbm}
\usepackage{bbding}
\usepackage{enumerate}
\usepackage{algpseudocode}
\usepackage{multirow}
\usepackage{epsfig}
\usepackage{tikz,harvard,fancybox}
\usepackage{cuted}
\usepackage{units}
\usepackage{subfigure}

\makeatletter
\xpatchcmd\ALG@step{\arabic{ALG@line}}{\fmtlinenumber{ALG@line}}{}{}
\makeatother
\let\fmtlinenumber\arabic 

\newtheorem{Theorem}{Theorem}
\newtheorem{Assumption}{Assumption}

\newtheorem{Lemma}{Lemma}

\newfont{\bb}{msbm10}

\newcommand{\tr}{^{\sf T}}

\newcommand{\m}[1]{{\mathrm{#1}}}

\allowdisplaybreaks[4]

\begin{document}
\begin{frontmatter}

\title{Local Adapt-Then-Combine Algorithms for
Distributed Nonsmooth Optimization: Achieving
Provable Communication Acceleration\thanksref{footnoteinfo1}} 

\thanks[footnoteinfo1]{This work was supported in part by the National Key R\&D Project of China  under Grant Nos. 2020YFA0714300, and the National Natural Science Foundation of China under Grant Nos. 62576098 and 62473098.}

\thanks[footnoteinfo2]{Corresponding author: Jinde Cao.}

\author[Comp,Math]{Luyao Guo}\ead{lyguo@szut.edu.cn},    
\author[Cyber]{Xinli Shi}\ead{xinli\_shi@seu.edu.cn},
\author[Math]{Wenying Xu}\ead{wyxu@seu.edu.cn},
\author[Math]{Jinde Cao\thanksref{footnoteinfo2}}\ead{jdcao@seu.edu.cn}

\address[Comp]{School of Computer Science and Engineering, Suzhou University of Technology, Suzhou 215500, China}
\address[Math]{School of Mathematics, Southeast University, Nanjing 210096, China}
\address[Cyber]{School of Cyber Science and Engineering, Southeast University, Nanjing 210096, China}

\begin{keyword}                           
Distributed optimization, composite optimization, local updates, communication-efficient algorithm.
\end{keyword}                             

\begin{abstract}                          
This paper is concerned with the distributed composite optimization problem over networks, where agents aim to minimize a sum of local smooth components and a common nonsmooth term. Leveraging the probabilistic local updates mechanism, we propose a communication-efficient Adapt-Then-Combine (ATC) framework, FlexATC, unifying numerous ATC-based distributed algorithms. Under stepsizes independent of the network topology and the number of local updates, we establish sublinear and linear convergence rates for FlexATC in convex and strongly convex settings, respectively. Remarkably, in the  strong convex setting, the linear rate is decoupled from the objective functions and network topology, and FlexATC permits communication to be skipped in most iterations without any deterioration of the linear rate. In addition, the proposed unified theory demonstrates for the first time that local updates provably lead to communication acceleration for ATC-based distributed algorithms. Numerical experiments further validate the efficacy of the proposed framework and corroborate the theoretical results.
\end{abstract}

\end{frontmatter}

\section{Introduction}
Consider the following composite distributed optimization problem involving a set of agents $[n]\triangleq\{1, 2, \dots, n\}$, where each agents has a private loss function $f_i:\mathbb{R}^d\rightarrow \mathbb{R}$ ($L$-smooth) and public loss function $r:\mathbb{R}^d\rightarrow \mathbb{R}\cup\{\infty\}$ (not required to be smooth)
\begin{equation}\label{EQ:Problem1}
\begin{aligned}
\min_{x\in \mathbb{R}^d}\left\{\frac{1}{n}\sum_{i=1}^{n}\Big[f_i(x)+r(x)\Big]\right\}.
\end{aligned}
\end{equation}
In this context, a network of agents collaborates to minimize the average of the global objective. In recent years, there has been growing interest in addressing the optimization problem through distributed approaches. The main driving force behind these methods is the potential of decentralization to remove the necessity for data sharing and centralized synchronization, thereby reducing the high latency typically associated with centralized computing architectures \cite{Nedic20151,Nedic2018}.

A direct approach to tackle distributed optimization problems is to utilize the gradient descent method in combination with the underlying network structure \cite{Nedic2009,Yuan2016}. However, in the presence of heterogeneous data, these primal algorithms are vulnerable to the phenomenon known as ``client-drift." This occurs because the differing local functions at each node cause each client to converge to the minima of its respective function $f_i(x)+r(x)$, which may be significantly different from the solution of \eqref{EQ:Problem1}. On the other hand, recent advancements in the primal-dual framework have introduced alternative techniques to address the challenges posed by heterogeneous data in distributed optimization, as demonstrated in \cite{shi2018augmented,liang2019exponential,EXTRA,PGEXTRA,Harnessing,DIGing,Guo2022,Guo2023,NIDS,ExactDiffusion,D2,Yuan2021,AugDGM1,AugDGM2,SONATA,Sulaiman2021,Xu2021,song2024optimal}. Despite these advances, distributed optimization algorithms still encounter challenges related to communication bottlenecks.

\begin{table*}[!t]
\renewcommand\arraystretch{2}
\begin{center}
\caption{Comparison with existing ATC-based algorithm frameworks. LU = Local Updates, Comm. = Communication, Acc. = Acceleration.}
\scalebox{0.88}{
\begin{tabular}{ccccccc}
\hline
\multirow{2}{*}{Algorithm} &\# of Comm.: $p=1$&LU?&Comm. Acc.?&\# of Comm.: $p\in(0,1]$&LU?&Comm. Acc.?\\
&\multicolumn{3}{c}{literature \cite{Xu2021,Sulaiman2021}} &\textbf{this paper}&\textbf{this paper}&\textbf{this paper}\\
\hline
(Prox-)NIDS/ED/D2&$\mathcal{O}\left(\frac{\kappa}{1-\rho}\log \epsilon^{-1}\right)$&\textcolor[rgb]{0.7,0,0}{\XSolidBrush}&\textcolor[rgb]{0.7,0,0}{\XSolidBrush}
&$\mathcal{O}\left(\frac{\kappa}{1-\rho}\log \epsilon^{-1}\right)
\xrightarrow[\text{\textbf{improved}}]{p=\frac{1}{\sqrt{\kappa(1-\rho)}}}
\mathcal{O}\left(\sqrt{\frac{\kappa}{1-\rho}}\log\epsilon^{-1}\right)$&\textcolor[rgb]{0,0.6,0}{\Checkmark}&\textcolor[rgb]{0,0.6,0}{\Checkmark}\\
(Prox-)MG-ED&$\mathcal{O}\left(\frac{\kappa}{\sqrt{1-\rho}}\log\epsilon^{-1}\right)$&\textcolor[rgb]{0.7,0,0}{\XSolidBrush}&\textcolor[rgb]{0.7,0,0}{\XSolidBrush}
&$\mathcal{O}\left(\frac{\kappa}{\sqrt{1-\rho}}\log\epsilon^{-1}\right)
\xrightarrow[\text{\textbf{improved}}]{p=\frac{1}{\sqrt{\kappa}}}
\mathcal{O}\left(\sqrt{\frac{\kappa}{1-\rho}}\log\epsilon^{-1}\right)$&\textcolor[rgb]{0,0.6,0}{\Checkmark}&\textcolor[rgb]{0,0.6,0}{\Checkmark}\\
(Prox-)ATC-GT&$\mathcal{O}\left(\frac{\kappa}{(1-\rho)^2}\log\epsilon^{-1}\right)$&\textcolor[rgb]{0.7,0,0}{\XSolidBrush}&\textcolor[rgb]{0.7,0,0}{\XSolidBrush}&
$
\mathcal{O}\left(\frac{\kappa}{(1-\rho)^2}\log\epsilon^{-1}\right)
\xrightarrow[\text{\textbf{improved}}]{p=\frac{1}{\sqrt{\kappa(1-\rho)^2}}}
\mathcal{O}\left(\frac{\sqrt{\kappa}}{1-\rho}\log\epsilon^{-1}\right)
$&\textcolor[rgb]{0,0.6,0}{\Checkmark}&\textcolor[rgb]{0,0.6,0}{\Checkmark}\\
MG-SONATA&$\mathcal{O}\left(\frac{\kappa}{\sqrt{1-\rho}}\log\epsilon^{-1}\right)$&\textcolor[rgb]{0.7,0,0}{\XSolidBrush}&\textcolor[rgb]{0.7,0,0}{\XSolidBrush}&\
$
\mathcal{O}\left(\frac{\kappa}{\sqrt{1-\rho}}\log\epsilon^{-1}\right)
\xrightarrow[\text{\textbf{improved}}]{p=\frac{1}{\sqrt{\kappa}}}
\mathcal{O}\left(\sqrt{\frac{\kappa}{1-\rho}}\log\epsilon^{-1}\right)
$&\textcolor[rgb]{0,0.6,0}{\Checkmark}&\textcolor[rgb]{0,0.6,0}{\Checkmark}\\
\hline
\end{tabular}}
\label{Table-complexity}
\end{center}
\end{table*}

To reduce communication costs, there has been a growing interest in \emph{local updates} methods, where multiple iterations of local computations are performed between communication steps \cite{Alghunaim2023,Nguyen2022,Liu2023,ProxSkip,MG-Skip}. These approaches aim to strike a balance between computation and communication, allowing agents to make progress on their local objectives without frequent exchanges of information, thus alleviating the communication bottleneck. In \cite{Alghunaim2023}, leveraging the ED \cite{ExactDiffusion,Yuan2021} (or NIDS \cite{NIDS} or D2 \cite{D2}) algorithm and employing deterministic periodic local updates, the local-ED (LED) algorithm was introduced. Similarly, in \cite{Nguyen2022} and \cite{Liu2023}, by enabling communication-efficient local updates in gradient tracking (GT), two local-GT algorithms were proposed. Although LED and local-GT have been shown to accelerate communication through local stochastic gradient steps, it has yet to be demonstrated that these local deterministic gradient steps can also reduce communication complexity. In addition, the stepsize selection in both LED and local-GT is inversely related to the number of local gradient steps. It implies that an increase in the number of local updates results in smaller permissible stepsizes, which in turn leads to slower convergence. To accelerate communication in deterministic settings, based on ED/NIDS/D2, ProxSkip \cite{ProxSkip,RandProx} has been introduced, which incorporates probabilistic local updates, meaning that communication occurs with a certain probability. This advancement is achieved without relying on traditional acceleration techniques, and the stepsize for ProxSkip is of the order $\mathcal{O}\left(\nicefrac{1}{L}\right)$, which is independent of the number of local updates. However, this approach is effective only for smooth loss function and the network exhibits sufficient connectivity. However, this approach is effective only when the network has sufficient connectivity and the loss function is smooth. To address this limitation, in \cite{MG-Skip}, the combination of communication skipping methods and multiple-round gossip communication has led to the development of MG-Skip.

In this paper, using the probabilistic local updates method, we provide a \underline{Flex}ible \underline{A}dapt-\underline{T}hen-\underline{C}ombine (ATC)-based distributed algorithm (FlexATC) for the composite distributed optimization problem \eqref{EQ:Problem1}. Table \ref{Table-complexity} shows the comparison with existing ATC-based algorithm frameworks. The key contributions are summarized as follows.
\begin{itemize}
  \item We introduce a novel unified distributed optimization algorithm framework, FlexATC, with network-independent and local update count-independent stepsizes, which is advantageous for practical implementation.
      FlexATC integrates numerous existing ATC-based distributed algorithms, encompassing both classical algorithms \cite{NIDS,ExactDiffusion,Yuan2021,AugDGM1,AugDGM2,SONATA} and locally updated algorithms \cite{Alghunaim2023,Nguyen2022,ProxSkip,MG-Skip}. To the best of our knowledge, FlexATC is the first unified ATC-based algorithm framework to incorporate probabilistic local updates.
  \item In the convex setting, we establish the $\mathcal{O}\left(\nicefrac{1}{K}\right)$ convergence rate of FlexATC. In the strongly convex setting, we establish the linear convergence rate of FlexATC. Importantly, we show that the derived rate is decoupled from the local loss function and network topology, matching the linear rate of the centralized proximal. Furthermore, we demonstrate that FlexATC can skip communications with a probability $p$ during execution without affecting the linear convergence rate.
  \item Previous researches \cite{ProxSkip,RandProx,MG-Skip} have exclusively showcased the benefits of probabilistic local updates for ED/NIDS/D2. However, whether other algorithms can also benefit from this mechanism remains an open question. In this paper, we provide the first evidence that probabilistic local updates provably accelerate communication for ATC-based distributed algorithms.
\end{itemize}

The rest of the paper is organized as follows. Section \ref{SEC3} presents FlexATC and explores its connections to existing approaches.
Section \ref{SEC4} provides the convergence and complexity analysis, and demonstrate that probabilistic local updates effectively accelerate communication in ATC-based algorithms.
Section \ref{SEC5} includes several numerical simulations to corroborate the theoretical results. Section \ref{SEC6} concludes the paper.

\emph{Notations:} Let $\mathbb{R}^n$ denote the $n$-dimensional vector space equipped with the inner product $\langle \cdot, \cdot \rangle$. The norms considered in this paper are the $\ell_1$-norm and $\ell_2$-norm, denoted as $\|\cdot\|_1$ and $\|\cdot\|$, respectively. The zero matrix and identity matrix of appropriate dimensions are represented $\bm{0}$ and $\bm{I}$, respectively. The vector with all components equal to one is represented as $\bm{1}$. For a matrix $H \in \mathbb{R}^{p \times n}$, we define the operator norm as $\|H\| \triangleq \max_{\|s\| = 1} \|Hs\|$, and $\sigma_m(H)$ refers to the minimum nonzero singular value of $H$. The notation $H \succeq 0$ indicates that $H$ is positive semi-definite, $\mathrm{null}(\cdot)$ and $\mathrm{span}(\cdot)$ represent the null space and the range space of a vector or matrix, respectively. The probability of event $s$ occurring is given by $\mathrm{Prob}(s)$. We use $\m{col}\{a_1, \ldots, a_n\}$ to represent the column vector formed by stacking $a_1, \ldots, a_n$ vertically, i.e., $[a_1\tr, \ldots, a_n\tr]\tr$, and $\lfloor \cdot \rfloor$ denotes rounding toward $-\infty$. Finally, the Kronecker product is denoted by $\otimes$.

\section{Framework: FlexATC}\label{SEC3}
\subsection{Problem and network settings}
\begin{Assumption}\label{ASS1}
$f_i$ is $\mu$-strongly convex with $\mu\geq0$; $f_i$ is $L$-smooth with $L>0$, i.e,
$\|\nabla f_i(x^{(1)})-\nabla f_i(x^{(2)})\|\leq L \|x^{(1)}-x^{(2)}\|, \text{ for any } x^{(1)},x^{(2)}\in \mathbb{R}^d$.
$r$ proper, closed, and convex; $r$ is proximable, i.e., for $\alpha > 0$, its proximal mapping is defined as
  $\mathrm{prox}_{\alpha r}(x)\triangleq\mathrm{argmin}_{s\in\mathbb{R}^d}\{r(s)+\nicefrac{1}{2\alpha}\|s-x\|^2\}$,
and has an analytical solution or can be calculated efficiently.
\end{Assumption}

Consider a network represented by an undirected and connected graph $\mathcal{G}(\mathcal{N}, \mathcal{E})$, where $\mathcal{N}\triangleq\{1,2,\cdots,n\}$ is the set of vertices, and $\mathcal{E} \subseteq \mathcal{N} \times \mathcal{N}$ is the set of edges. An edge between agents $i$ and $j$ exists if and only if $(i,j) \in \mathcal{E}$, meaning there is a communication link between these two agents. Define $\mathcal{N}_i\triangleq\{j\in\mathcal{N}:(i,j)\in\mathcal{E}\}$. Let the global mixing matrix $W = [W_{ij}]_{i,j=1}^n\in\mathbb{R}^{n\times n}$ and impose the following assumption.
\begin{Assumption}\label{MixingMatrix-1}
$W\tr=W$, $W_{ij} = W_{ji} = 0$ if $(i,j) \notin \mathcal{E}$, and $W_{ij}=W_{ji} > 0$ if $(i,j) \in \mathcal{E}$; $W\bm{1}=\bm{1}$.
\end{Assumption}
Under Assumption \ref{MixingMatrix-1}, if the eigenvalues of $W$ are denoted as $\lambda_1 \geq \lambda_2 \geq \lambda_3 \geq \cdots \geq \lambda_n$, then the following relationships hold: $1 = \lambda_1 > \lambda_2 \geq \cdots \geq \lambda_n > -1$. The spectral gap is defined as $\rho \triangleq \max\{|\lambda_2|, |\lambda_n|\} \in (0,1)$. Consider $\bm{W} \triangleq W \otimes \bm{I}_d \in \mathbb{R}^{nd \times nd}$. Let $\bm{v} \triangleq \text{col}\{v_1, \ldots, v_n\}$, where each $v_i \in \mathbb{R}^d$ represents the local information of agent $i$. The $i$-th block of the vector $\bm{Wv}$ is expressed as $\text{col}\{\sum_{j \in \mathcal{N}_i} W_{ij} v_j\}$, and this can be computed by agent $i$ via localized communication with its neighbors. This operation corresponds to one communication round. Consequently, $\bm{W}^N \bm{v}$, where $N > 1$ represents $N$ consecutive rounds of communication.

\subsection{Algorithm description}
Let $\bm{x}^k \triangleq \text{col}\{x_1^k, x_2^k, \ldots, x_n^k\}$ represent the global state at the $k$-th iteration, where $x_i^k \in \mathbb{R}^d$ denotes the local state of node $i$ at the $k$-th iteration. To describe the algorithm, we also define the following network quantities: $F(\bm{x})\triangleq\sum_{i=1}^{n}f_i(x_i)$, $R(\bm{x})\triangleq\sum_{i=1}^{n}r(x_i)$. We further introduce matrices $\bm{A}=A\otimes \bm{I}_d$, $\bm{B}=B\otimes \bm{I}_d$ which are polynomial functions of $\bm{W}$.
\begin{Assumption}\label{MixingMatrix-2}
$A\tr=A$ and $A\bm{1}=\bm{1}$, $B\succeq\bm{0}$ and satisfies $\mathrm{null}(B)=\mathrm{span}(\bm{1})$, and $\bm{I}-A^2-B\succeq\bm{0}$.
\end{Assumption}
In addition, let $\bm{A}_k\triangleq \theta_k \bm{A} + (1-\theta_k)\bm{I}$ and $\bm{B}_k \triangleq \theta_k \bm{B}$,
where $\theta_k\in\{0,1\}$ and $\mathop{\m{Prob}}(\theta_k =1) = p$. With these preparations, we develop FlexATC as the following form:
\begin{subequations}\label{Framework-UP}
\begin{align}
\bm{w}^k &= \bm{x}^k -\alpha \nabla F(\bm{x}^k), \label{Framework-UP-step1}\\
\bm{x}^{k+1} & = \mathrm{prox}_{\alpha R}\left(\bm{A}_k\left(\bm{w}^k+\bm{y}^k\right)\right), \label{Framework-UP-step2}\\
\bm{y}^{k+1} & = \bm{y}^k - p\bm{B}_k\left(\bm{w}^k+\bm{y}^k\right), \label{Framework-UP-step3}
\end{align}
\end{subequations}
where $\alpha>0$ is the stepsize. In the implementation of FlexATC, step \eqref{Framework-UP-step1} is the adaptation step, and steps \eqref{Framework-UP-step2} and \eqref{Framework-UP-step3} are combination steps. It can be observed from FlexATC \eqref{Framework-UP} that the communication is not required in every iteration and is instead triggered with a certain probability $p$. Thus, the number of local gradient steps of FlexATC is random, and from the perspective of expectation, the communication occurs every $\nicefrac{1}{p}$ iterations. Different from deterministic periodic local updates, where the frequency of local gradient steps is fixed within a determined period, we adopt a probabilistic local update mechanism to reduce communication frequency. This approach offers more flexibility in the frequency of local updates. Specifically, before the algorithm begins, the communication rounds can be randomly determined using a coin-flip method. This process can be executed on any single node and then broadcast to the other nodes, requiring only one round of communication. Since $\theta_k$ is encoded in \( \{0, 1\} \), the additional communication overhead is negligible. Moreover, this process does not result in any leakage of privacy information. Algorithm \ref{alg-FlexATC} elaborates on the distributed implementation of FlexATC.

\begin{algorithm}[!t]
  \caption{FlexATC} 
    \label{alg-FlexATC}
  \begin{algorithmic}[1]
    \Require
      stepsize $\alpha\in(0,\nicefrac{2}{L})$; probability $p\in(0,1]$; matrices $A=[A_{ij}]_{i,j=1}^n$, $B=[B_{ij}]_{i,j=1}^n$; sequence of independent coin flips $\{\theta_0$, $\theta_1$, $\ldots$ $\theta_{K-1}\}$ with $\mathop{\m{Prob}}(\theta_k =1) = p$; initial $x^0_{i}\in\mathbb{R}^d$ and $y^0_i = 0\in\mathbb{R}^d,i=1,\ldots,n$
    \For{$k=0,1,2,\ldots,K-1$}
      \State Adaptation: $w^k_i=x^k_i-\alpha\nabla f_i(x^k_i)$
       \If {$\theta_k=1$}
            \State  Combination:
                $\bigg\{\begin{array}{l}
                a_i^k=\sum_{j\in \mathcal{N}_i}A_{ij}(w^k_j+y^k_j) \\
                b_i^k=\sum_{j\in \mathcal{N}_i}B_{ij}(w^k_j+y^k_j)
                \end{array}$
            \State Correction: $x_i^{k+1}=\m{prox}_{\alpha r}(a_i^k)$, $y_i^{k+1}=y_i^k-p b_i^k$

        \Else
            \State Skip Communication:
             \State  $x^{k+1}_i=\mathrm{prox}_{\alpha r}(w^k_i+y_i^k)$,
                $y^{k+1}_i =  y_i^k$
        \EndIf
    \EndFor
    \Ensure
      $\bm{x}^K$.
  \end{algorithmic}
\end{algorithm}

\subsection{Relation to existing ATC algorithms}\label{Relation-1}
In this subsection, we delve into the associations between FlexATC and existing ATC-based distributed algorithms (refer to Table \ref{Table-relation}). We demonstrate that by making specific selections for $\bm{A}$ and $\bm{B}$, several ATC-based distributed algorithms can be derived from the proposed FlexATC \eqref{Framework-UP}.

We initiate a discussion on the correlation between FlexATC, with $p=1$ and $r=0$ as special scenario, and ED/NIDS/D2 \cite{ExactDiffusion,NIDS,D2}, MG-ED \cite{Yuan2021}, and ATC-GT (encompassing AugDGM \cite{AugDGM1}, ATC-DIGing \cite{AugDGM2}, and SONATA \cite{SONATA}). To this case, letting $\bm{z}^k\triangleq\bm{w}^k+\bm{y}^k$ and $\triangle_F(\bm{x}^k)\triangleq\nabla F(\bm{x}^{k})-\nabla F(\bm{x}^{k-1})$, it follows from \eqref{Framework-UP-step1} and \eqref{Framework-UP-step3} that
$$\bm{z}^k=\bm{z}^{k-1}-\bm{x}^{k-1}-\bm{B}\bm{z}^{k-1}+\bm{x}^k-\alpha\triangle_F(\bm{x}^k).$$ Since $\bm{A}$ and $\bm{B}$ are polynomial functions of $\bm{W}$, we have $\bm{AB}=\bm{BA}$. Thus, it holds that
\begin{align*}
\bm{A}\bm{z}^k=\bm{A}(\bm{z}^{k-1}-\bm{x}^{k-1}-\bm{B}\bm{z}^{k-1})+\bm{A}(\bm{x}^k-\alpha\triangle_F(\bm{x}^k)).
\end{align*}
Combining with $\bm{x}^{k+1}=\bm{A}\bm{z}^k$, FlexATC ($p=1$ and $r=0$) becomes the following scheme
\begin{align}\label{FlexATC-primal}
\bm{x}^{k+1}&=\bm{x}^{k}-\bm{A}\bm{x}^{k-1}-\bm{B}\bm{x}^{k}+\bm{A}(\bm{x}^k-\alpha\triangle_F(\bm{x}^k)).
\end{align}

\noindent \textbf{1) ED/NIDS/D2 and MG-ED:} Letting $\bm{A}+\bm{B}=\bm{I}$, the update of FlexATC will become
\begin{align*}
\bm{x}^{k+1}&=\bm{A}(\bm{x}^{k}-\bm{x}^{k-1})+\bm{A}(\bm{x}^{k} -\alpha\triangle_F(\bm{x}^k)).
\end{align*}
Therefore, by selecting $(\bm{A},\bm{B})=(\bm{I}-c(\bm{I}-\bm{W}),c(\bm{I}-\bm{W}))$, where $c\in(0,\nicefrac{1}{2}]$, NIDS and D2 can be derived from FlexATC ($p=1$ and $r=0$). Similarly, by choosing
$(\bm{A},\bm{B})=(\nicefrac{1}{2}(\bm{I}+\bm{W}),\nicefrac{1}{2}(\bm{I}-\bm{W}))$, ED can be derived from FlexATC. Furthermore, under the conditions of $\bm{W}\succeq\bm{0}$, choosing $(\bm{A},\bm{B})=(\bm{W}^N,\bm{I}-\bm{W}^N)$, where $N>1$, MG-ED can be also derived from FlexATC.

\noindent \textbf{2) ATC-GT:} Recall the update of ATC-GT
\begin{align*}
\bm{x}^{k+1}=\bm{W}\left(\bm{x}^k-\alpha \bm{y}^k\right),\
\bm{y}^{k+1}=\bm{W}\left(\bm{y}^{k}+\triangle_F(\bm{x}^{k+1})\right).
\end{align*}
From the first step, we have $\bm{x}^{k+1}-\bm{W}\bm{x}^k=-\alpha\bm{W}\bm{y}^k$. By multiplying both sides of the second step by $-\alpha\bm{W}$, we have
$$
-\alpha\bm{W}\bm{y}^{k+1}=-\alpha\bm{W}^2\bm{y}^{k}-\alpha\bm{W}^2\triangle_F(\bm{x}^{k+1}).
$$
It implies that
\begin{align*}
\bm{x}^{k+2}-\bm{W}\bm{x}^{k+1}&=\bm{W}\bm{x}^{k+1}-\bm{W}^2\bm{x}^{k}-\alpha\bm{W}^2\triangle_F(\bm{x}^{k+1}).
\end{align*}
Then, the update of ATC-GT is equivalent to
$$
\bm{x}^{k+1}=2\bm{W}\bm{x}^k-\bm{W}^2\left(\bm{x}^{k-1}-\alpha\triangle_F(\bm{x}^k)\right),
$$
which is an instance of the general scheme FlexATC ($p=1$ and $r=0$) \eqref{FlexATC-primal}, with $\bm{W}\succeq\bm{0}$ and $(\bm{A},\bm{B})=(\bm{W}^2,(\bm{I}-\bm{W})^2)$. Moreover, with $\bm{W}\succeq\bm{0}$ and $(\bm{A},\bm{B})=(\bm{W}^{2N},(\bm{I}-\bm{W}^N)^2)$, FlexATC \eqref{FlexATC-primal} is equivalent to MG-SONATA.

Then, we consider the case where $r\neq0$ and $p=1$, it follows from \eqref{Framework-UP-step1} and \eqref{Framework-UP-step3} that FlexATC becomes
\begin{align*}
\bm{z}^k&=\bm{z}^{k-1}-\bm{x}^{k-1}-\bm{B}\bm{z}^{k-1}+\bm{x}^k-\alpha\triangle_F(\bm{x}^k),\\
\bm{x}^{k+1} & = \mathrm{prox}_{\alpha R}(\bm{A}\bm{z}^k).
\end{align*}
By comparison, one can see that it is equivalent to ATC-PUDA \cite{Sulaiman2021}. Let $\bm{s}^k=\bm{A}\bm{z}^k$. Since $\bm{AB}=\bm{BA}$, FlexATC ($p=1$ and $r=0$) can also be written as
\begin{align*}
\bm{s}^k&=(\bm{I}-\bm{B}) \bm{s}^{k-1}-\bm{A}\bm{x}^{k-1}+\bm{A}(\bm{x}^k-\alpha\triangle_F(\bm{x}^k)),\\
\bm{x}^{k+1} & = \mathrm{prox}_{\alpha R}(\bm{s}^k),
\end{align*}
which is equivalent to ATC-ABC \cite{Xu2021}.

\begin{table}[!t]
\renewcommand\arraystretch{1.6}
\begin{center}
\caption{The relation of FlexATC to existing ATC-based distributed algorithms for specific choices of $\bm{A}$ and $\bm{B}$.}
\scalebox{0.8}{
\begin{tabular}{ccccccc}
\hline
Algorithm & $r\neq0$ &  $\bm{A}$ & $\bm{B}$  & \# of Comm.\\
\hline
(Prox-)ED/NIDS/D2 &\Checkmark& $\bm{I}-c(\bm{I}-\bm{W})$&$c(\bm{I}-\bm{W})$&$1$\\
(Prox-)MG-ED &\Checkmark&  $\nicefrac{1}{2}(\bm{I}+\bm{W}^N)$&$\nicefrac{1}{2}(\bm{I}-\bm{W}^N)$&$N$\\
(Prox-)ATC-GT&\Checkmark& $\bm{W}^2$ & $(\bm{I}-\bm{W})^2$  &$2$\\
MG-SONATA &\Checkmark& $(\bm{W}^N)^2$ & $(\bm{I}-\bm{W}^N)^2$  &$2N$\\
LED &-& $\bm{W}$ & $\bm{I}-\bm{W}$ & $p$ (periodic) \\
local-GT& -& $\bm{W}^2$ & $(\bm{I}-\bm{W})^2$ & $2p$ (periodic) \\
ProxSkip&-&  $\bm{I}-c(\bm{I}-\bm{W})$ & $c(\bm{I}-\bm{W})$ & $p$ (random)\\
\hline
\end{tabular}}
\label{Table-relation}
\end{center}
\end{table}

Finally, we consider the case where $p\in(0,1]$. By comparison, it is easy to verify that ProxSkip \cite{ProxSkip} is a instance of FlexATC with $(\bm{A},\bm{B})=(\bm{I}-c(\bm{I}-\bm{W}),c(\bm{I}-\bm{W}))$. In addition, to show the connection with periodic local updates based algorithms, from the perspective of expectations, we rewrite FlexATC \eqref{Framework-UP} as
\begin{subequations}\label{Framework-UP-periodic}
\begin{align}
\bm{\psi}^r_{t+1}& = \bm{\psi}_t^r -\alpha \nabla F(\bm{\psi}_t^r) + \bm{y}^r, \text{ for } t = 1,\cdots,M,\\
\bm{y}^{r+1}& = \bm{y}^{r} - \nicefrac{1}{M}\bm{B}\bm{\psi}^r_{M+1},\\
\bm{x}^{r+1}& = \m{prox}_{\alpha R}\left(\bm{A}\bm{\psi}^r_{M+1}\right),
\end{align}
\end{subequations}
where $r$ denotes the communication round, $M=\lfloor\nicefrac{1}{p}\rfloor$ denotes the number of local updates, and $\bm{\psi}^r_{1}=\bm{x}^r$. When iterating through $\bm{\psi}^r_{t+1} = \bm{\psi}_t^r -\alpha \nabla F(\bm{\psi}_t^r) + \bm{y}^r, \text{ for } t = 1,\ldots,M$, it holds that
$\bm{\psi}^r_{M+1}=\bm{x}^r-\alpha\sum_{t=1}^{M}\nabla F(\bm{\psi}_t^r)+M\bm{y}^r$.
Then, by eliminating $\bm{y}^r$, \eqref{Framework-UP-periodic} takes the following form
\begin{align*}
\bm{\psi}^r_{M+1}&=(\bm{I}-\bm{B})\bm{\psi}^{r-1}_{M+1}+(\bm{x}^r-\bm{x}^{r-1})-\alpha\sum_{t=1}^{M}\triangle_F(\bm{\psi}_t^r),\\
\bm{x}^{r+1}& = \m{prox}_{\alpha R}\left(\bm{A}\bm{\psi}^r_{M+1}\right),
\end{align*}
with $\bm{\psi}^r_{0}=\bm{x}^r$. When $r=0$, one has $\bm{x}^{r+1}=\bm{A}\bm{\psi}^r_{M+1}$ and
\begin{align*}
\bm{x}^{r+1}&=\bm{x}^{r}-\bm{A}\bm{x}^{r-1}-\bm{B}\bm{x}^{r}+\bm{A}(\bm{x}^r-\alpha\sum_{t=1}^{M}\triangle_F(\bm{\psi}_t^r)).
\end{align*}
Therefore, LED \cite{Alghunaim2023} is an instance of FlexATC with $\bm{W}\succeq\bm{0}$ and $(\bm{A},\bm{B})=(\bm{W},\bm{I}-\bm{W})$. Recall local-GT \cite{Nguyen2022}. Similar as \cite[Proposition 3.1]{Liu2023}, local-GT can be rewritten as follows.
\begin{align*}
\bm{x}^{r+1}&=\bm{W}\left(\bm{x}^r-\alpha \bm{y}^r\right),\\
\bm{y}^{r+1}&=\bm{W}\Big(\bm{y}^{r}+\sum_{t=1}^{M}(\nabla F(\bm{\psi}_t^{r+1})-\nabla F(\bm{\psi}_t^r))\Big).
\end{align*}
Thus, with $\bm{W}\succeq\bm{0}$ and $(\bm{A},\bm{B})=(\bm{W}^2,(\bm{I}-\bm{W})^2)$, local-GT is an instance of FlexATC.

\section{Main theoretical results}\label{SEC4}
This section first establishes the sublinear convergence rate for FlexATC \eqref{Framework-UP} in the convex setting, and then derives the linear convergence rate in the strongly convex setting. Finally, it proceeds to analyze the complexity of FlexATC.

We commence by introducing the ensuing lemma.

\begin{Lemma}\label{LEM-1}
Under Assumptions \ref{ASS1}, \ref{MixingMatrix-1}, and \ref{MixingMatrix-2},
if the point $(\bm{x}^{\star},\bm{w}^{\star},\bm{u}^\star)$ satisfies that
\begin{subequations}\label{KKT-Condition}
\begin{align}
\bm{w}^{\star}&=\bm{x}^{\star}-\alpha \nabla F(\bm{x}^{\star}),\label{KKT-Condition-1}\\
\bm{x}^{\star}&=\mathrm{prox}_{\alpha R}(\bm{A} (\bm{w}^{\star}-\sqrt{\bm{B}}\bm{u}^\star)), \label{KKT-Condition-2}\\
\bm{0}&=\sqrt{\bm{B}}(\bm{w}^{\star}-\sqrt{\bm{B}}\bm{u}^\star),\label{KKT-Condition-3}
\end{align}
\end{subequations}
then $\bm{x}^{\star}=\bm{1}_n\otimes x^\star$ with $x^{\star}\in \mathbb{R}^d$ solving problem \eqref{EQ:Problem1}.
\end{Lemma}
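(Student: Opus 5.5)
Set $\bm{v}^\star\triangleq\bm{w}^\star-\sqrt{\bm{B}}\bm{u}^\star$. The plan has three steps: show that $\bm{v}^\star$ is consensual, deduce that $\bm{x}^\star$ is consensual, and then average the local optimality conditions so that the correction term $\sqrt{\bm{B}}\bm{u}^\star$ disappears.

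\textbf{Consensus of $\bm{v}^\star$.} By \eqref{KKT-Condition-3}, $\sqrt{\bm{B}}\bm{v}^\star=\bm{0}$. Since $B\succeq\bm{0}$, the matrices $\sqrt{B}$ and $B$ have the same null space. By Assumption \ref{MixingMatrix-2} this null space is $\mathrm{span}(\bm{1})$, so $\bm{v}^\star=\bm{1}_n\otimes v$ for some $v\in\mathbb{R}^d$.

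\textbf{Consensus of $\bm{x}^\star$.} Because $A\bm{1}=\bm{1}$, we get $\bm{A}\bm{v}^\star=\bm{v}^\star$. Since $R$ is separable, $\mathrm{prox}_{\alpha R}$ acts blockwise, and \eqref{KKT-Condition-2} gives $x_i^\star=\mathrm{prox}_{\alpha r}(v)$ for every $i$. Hence $\bm{x}^\star=\bm{1}_n\otimes x^\star$ with $x^\star=\mathrm{prox}_{\alpha r}(v)$.

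\textbf{Optimality of $x^\star$.} The prox optimality condition reads $v-x^\star\in\alpha\partial r(x^\star)$. Fix $i$ and use $v=(\bm{v}^\star)_i=w_i^\star-(\sqrt{\bm{B}}\bm{u}^\star)_i$ together with \eqref{KKT-Condition-1}, which gives $w_i^\star=x^\star-\alpha\nabla f_i(x^\star)$. Then
\[
-\alpha\nabla f_i(x^\star)-(\sqrt{\bm{B}}\bm{u}^\star)_i\in\alpha\partial r(x^\star),\quad i=1,\dots,n.
\]
Averaging over $i$ is legitimate because $\partial r(x^\star)$ is convex, so the average of $n$ of its elements stays in it. The correction term averages to zero because $(\bm{1}^{\sf T}\otimes\bm{I}_d)\sqrt{\bm{B}}=\bm{0}$: indeed $\sqrt{B}$ is symmetric and $\sqrt{B}\bm{1}=\bm{0}$. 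Dividing by $\alpha>0$ yields
\[
\bm{0}\in\frac{1}{n}\sum_{i=1}^n\nabla f_i(x^\star)+\partial r(x^\star).
\]

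\textbf{Conclusion.} Problem \eqref{EQ:Problem1} is convex by Assumption \ref{ASS1}. Since each $f_i$ is smooth, the sum rule for subdifferentials holds, so the displayed inclusion is sufficient for optimality and $x^\star$ solves \eqref{EQ:Problem1}.

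The only delicate point is the null-space identity $\mathrm{null}(\sqrt{B})=\mathrm{null}(B)$. It follows from $\|\sqrt{B}z\|^2=z^{\sf T}Bz$ and the fact that, for $B\succeq\bm{0}$, $z^{\sf T}Bz=0$ if and only if $Bz=0$. Everything else is direct substitution.
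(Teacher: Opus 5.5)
Your proof is correct and follows essentially the same route as the paper. Both arguments show that $\bm{w}^\star-\sqrt{\bm{B}}\bm{u}^\star$ is consensual via the null space of $B$, use $A\bm{1}=\bm{1}$ and the blockwise prox to get a consensual $\bm{x}^\star$, and sum over agents so that $(\bm{1}_n\otimes\bm{I}_d)^{\sf T}\sqrt{\bm{B}}\bm{u}^\star=\bm{0}$ removes the dual term; the paper sums the vector identity rather than the inclusions, and your appeal to convexity of $\partial r(x^\star)$ is unnecessary because all $n$ per-agent expressions equal the same vector $v-x^\star$.
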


{\it Proof}: With Assumption \ref{MixingMatrix-2}, it holds that $\m{null}(\bm{B})=\mathrm{span}(\bm{1}_n\otimes\bm{I}_d)$. Then, from $\bm{0}=\bm{B}(\bm{w}^{\star}-\sqrt{\bm{B}}\bm{u}^\star)$, we have that the block elements of $\bm{w}^{\star}-\sqrt{\bm{B}}\bm{u}^\star$ are equal to each other. Since $A\bm{1}=\bm{1}$, we have that there exists $z^\star\in\mathbb{R}^d$ such that
$
\bm{A}(\bm{w}^{\star}-\sqrt{\bm{B}}\bm{u}^\star) = \bm{1}_n\otimes z^\star
$.
From \eqref{KKT-Condition-2} and the definition of the $\mathrm{prox}_{\alpha R}(\cdot)$, we have
$\bm{x}^{\star}=\mathrm{argmin}_{\bm{x}\in\mathbb{R}^{nd}}\{R(\bm{x})+\nicefrac{1}{2\alpha}\|\bm{x}-\bm{A}(\bm{w}^{\star}-\sqrt{\bm{B}}\bm{u}^\star)\|^2\}$.
It deduces that
\begin{align}\label{proof-lem-eq1}
x_i^{\star}=\mathrm{argmin}_{x\in\mathbb{R}^{d}}\{r(x)+\nicefrac{1}{2\alpha}\|x-z^\star\|^2\},\  i\in [n].
\end{align}
Thus, it holds that $x_1^\star=\cdots=x_n^\star\triangleq x^\star$. Then, we prove $x^\star$ solves problem \eqref{EQ:Problem1}. It follows from \eqref{proof-lem-eq1} that
\begin{align}\label{proof-lem-eq2}
\nicefrac{1}{\alpha}(z^\star-x^\star)\in \partial r(x^\star).
\end{align}
From $\m{null}(\sqrt{B})=\mathrm{span}(\bm{1})$, it holds that $(\bm{1}_n\otimes \bm{I}_d)\tr(\bm{w}^\star-\sqrt{\bm{B}}\bm{u}^\star)=nz^\star$.
In addition, from \eqref{KKT-Condition-1}, one has $(\bm{1}_n\otimes \bm{I}_d)\tr(\bm{w}^\star-\sqrt{\bm{B}}\bm{u}^\star)=(\bm{1}_n\otimes \bm{I}_d)\tr(\bm{x}^{\star}-\alpha \nabla F(\bm{x}^{\star})-\sqrt{\bm{B}}\bm{u}^\star)=nx^\star-\alpha\sum_{i=1}^{n}f_i(x^\star)$.
Therefore, $\nicefrac{1}{\alpha}(x^{\star}-z^{\star})-\nicefrac{1}{n}\sum_{i=1}^{n}\nabla f_i(x^{\star})=0$.
Together with \eqref{proof-lem-eq2}, we have $0\in \nicefrac{1}{n}\sum_{i=1}^{n}[\nabla f_i(x^{\star})+\partial r(x^\star)]$.
Thus, $x^\star$ solves problem \eqref{EQ:Problem1}.
\hfill{$\square$}

Let $\bm{u}^k$ satisfy $\bm{y}^k = -\sqrt{\bm{B}}\bm{u}k$ and $\bm{u}^0=\bm{0}$. Since $\bm{y}^0=\bm{u}^0=\bm{0}$, the update \eqref{Framework-UP} can be equivalently written as follows, since they generate an identical sequence $\{\bm{x}^k\}_{k=0}$.
\begin{subequations}\label{Framework-UP-2}
\begin{align}
  \bm{w}^k &= \bm{x}^k -\alpha \nabla F(\bm{x}^k), \\
  \bm{x}^{k+1} & = \mathrm{prox}_{\alpha R}(\bm{A}_k(\bm{w}^k-\sqrt{\bm{B}}\bm{u}^k)), \\
  \bm{u}^{k+1} & = \bm{u}^k + p \sqrt{\bm{B}_k}(\bm{w}^k-\sqrt{\bm{B}}\bm{u}^k), \label{Framework-UP-2-3}
\end{align}
\end{subequations}
where $\sqrt{\bm{B}_k}=\theta_k\sqrt{\bm{B}}$. Let the point $(\bm{x}^{\star},\bm{w}^{\star},\bm{u}_b^\star)$ satisfies the optimal condition \eqref{KKT-Condition} and $\bm{u}_b^\star\in\m{span}(\sqrt{\bm{B}})$. We then present the following lemma, which plays a crucial role in analyzing the convergence behavior of the FlexATC.
\begin{Lemma}\label{LEM-2}
Under Assumptions \ref{ASS1}, \ref{MixingMatrix-1}, and \ref{MixingMatrix-2}, if $\alpha>0$ and $p\in(0,1]$, the sequence $\{\bm{x}^k,\}$ generated by \eqref{Framework-UP-2} satisfies that
\begin{align}\label{lem-1-eq}
&\mathbb{E}\left[\left(\left\|\bm{x}^{k+1}-\bm{x}^\star\right\|^2+\nicefrac{1}{p^2}\left\|\bm{u}^{k+1}-\bm{u}_b^\star\right\|^2\right)|\theta_k\right] \nonumber\\
&\leq \left\|\bm{w}^k-\bm{w}^\star\right\|^2+\left(1-p^2\sigma_m(\bm{B})\right)\nicefrac{1}{p^2}\left\|\bm{u}^k-\bm{u}^\star_b\right\|^2.
\end{align}
\end{Lemma}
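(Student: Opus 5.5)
The plan is to compute the conditional expectation explicitly by splitting on the coin flip $\theta_k$. Write $\bm{z}^k\triangleq\bm{w}^k-\sqrt{\bm{B}}\bm{u}^k$ and $\bm{z}^\star\triangleq\bm{w}^\star-\sqrt{\bm{B}}\bm{u}_b^\star$. Set $\bm{e}\triangleq\bm{z}^k-\bm{z}^\star$ and $\bm{v}\triangleq\bm{u}^k-\bm{u}_b^\star$.

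\emph{Properties of the fixed point.} First I record two facts from \eqref{KKT-Condition}. Since $\m{null}(\sqrt{\bm{B}})=\m{null}(\bm{B})=\mathrm{span}(\bm{1}_n\otimes\bm{I}_d)$, \eqref{KKT-Condition-3} says $\bm{z}^\star$ is consensual. Hence $\sqrt{\bm{B}}\bm{z}^\star=\bm{0}$, and $\bm{A}\bm{z}^\star=\bm{z}^\star$ because $A\bm{1}=\bm{1}$. Consequently $\bm{x}^\star=\mathrm{prox}_{\alpha R}(\bm{z}^\star)=\mathrm{prox}_{\alpha R}(\bm{A}\bm{z}^\star)$. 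So $\bm{x}^\star$ is a fixed point of both branches of the $\bm{x}$-update, and the $\bm{u}$-update leaves $\bm{u}_b^\star$ fixed.

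\emph{The two branches.} If $\theta_k=0$ (probability $1-p$), then $\bm{u}^{k+1}=\bm{u}^k$. Nonexpansiveness of $\mathrm{prox}_{\alpha R}$ gives $\|\bm{x}^{k+1}-\bm{x}^\star\|^2\le\|\bm{e}\|^2$, so the Lyapunov quantity is at most $\|\bm{e}\|^2+\nicefrac{1}{p^2}\|\bm{v}\|^2$.

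If $\theta_k=1$ (probability $p$), nonexpansiveness gives $\|\bm{x}^{k+1}-\bm{x}^\star\|^2\le\|\bm{A}\bm{e}\|^2$. Using $\sqrt{\bm{B}}\bm{z}^\star=\bm{0}$, we have $\bm{u}^{k+1}-\bm{u}_b^\star=\bm{v}+p\sqrt{\bm{B}}\bm{e}$. Expanding the square, the quantity is at most
$$\bm{e}\tr(\bm{A}^2+\bm{B})\bm{e}+\nicefrac{1}{p^2}\|\bm{v}\|^2+\nicefrac{2}{p}\langle\bm{v},\sqrt{\bm{B}}\bm{e}\rangle.$$
By Assumption \ref{MixingMatrix-2}, $\bm{A}^2+\bm{B}\preceq\bm{I}$, so the first term is at most $\|\bm{e}\|^2$. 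Averaging the two branches, the $p$ and $\nicefrac{1}{p}$ cancel on the cross term, giving the bound $\|\bm{e}\|^2+\nicefrac{1}{p^2}\|\bm{v}\|^2+2\langle\bm{v},\sqrt{\bm{B}}\bm{e}\rangle$.

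\emph{Substituting back.} Now use $\bm{e}=(\bm{w}^k-\bm{w}^\star)-\sqrt{\bm{B}}\bm{v}$. Expanding,
$$\|\bm{e}\|^2+2\langle\sqrt{\bm{B}}\bm{v},\bm{e}\rangle=\|\bm{w}^k-\bm{w}^\star\|^2-\|\sqrt{\bm{B}}\bm{v}\|^2,$$
because the mixed terms $\mp2\langle\bm{w}^k-\bm{w}^\star,\sqrt{\bm{B}}\bm{v}\rangle$ cancel and $\|\sqrt{\bm{B}}\bm{v}\|^2-2\|\sqrt{\bm{B}}\bm{v}\|^2=-\|\sqrt{\bm{B}}\bm{v}\|^2$. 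This yields
$$\|\bm{w}^k-\bm{w}^\star\|^2+\nicefrac{1}{p^2}\|\bm{v}\|^2-\bm{v}\tr\bm{B}\bm{v}.$$

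\emph{Main obstacle: the spectral lower bound.} The remaining step is to show $\bm{v}\tr\bm{B}\bm{v}\ge\sigma_m(\bm{B})\|\bm{v}\|^2$. This requires $\bm{v}\in\mathrm{span}(\sqrt{\bm{B}})=\mathrm{span}(\bm{B})$, the orthogonal complement of $\m{null}(\bm{B})$.

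I would prove this by induction. We have $\bm{u}^0=\bm{0}$, and each increment $p\sqrt{\bm{B}_k}\bm{z}^k$ in \eqref{Framework-UP-2-3} lies in $\mathrm{span}(\sqrt{\bm{B}})$, so every $\bm{u}^k$ lies in this span. Together with $\bm{u}_b^\star\in\mathrm{span}(\sqrt{\bm{B}})$ by assumption, this gives $\bm{v}\in\mathrm{span}(\sqrt{\bm{B}})$.

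Since $\bm{B}$ is symmetric positive semidefinite, on this subspace $\bm{v}\tr\bm{B}\bm{v}$ is at least the smallest nonzero eigenvalue, i.e. $\sigma_m(\bm{B})$, times $\|\bm{v}\|^2$. Therefore $\nicefrac{1}{p^2}\|\bm{v}\|^2-\bm{v}\tr\bm{B}\bm{v}\le(1-p^2\sigma_m(\bm{B}))\nicefrac{1}{p^2}\|\bm{v}\|^2$, which is exactly \eqref{lem-1-eq}.
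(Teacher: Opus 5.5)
Your proof is correct and follows essentially the same argument as the paper: nonexpansiveness of $\mathrm{prox}_{\alpha R}$ on each coin-flip branch, the bound $\bm{A}^2+\bm{B}\preceq\bm{I}$, the cancellation of the cross terms after substituting $\bm{e}=(\bm{w}^k-\bm{w}^\star)-\sqrt{\bm{B}}\bm{v}$, and the range argument $\bm{v}\in\mathrm{span}(\sqrt{\bm{B}})$ giving $\bm{v}\tr\bm{B}\bm{v}\geq\sigma_m(\bm{B})\|\bm{v}\|^2$. The only difference is bookkeeping: you apply $\bm{A}^2+\bm{B}\preceq\bm{I}$ inside the communication branch before averaging, whereas the paper first bounds the $\bm{x}$-part by $\|\bm{e}\|^2_{\bm{I}-p\bm{B}}$ and then adds the separately computed $\bm{u}$-part; both arrive at the same intermediate bound $\|\bm{e}\|^2+\nicefrac{1}{p^2}\|\bm{v}\|^2+2\langle\bm{v},\sqrt{\bm{B}}\bm{e}\rangle$.
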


{\it Proof}:
Define
$\bm{w}_o^k\triangleq \bm{w}^k-\bm{w}^\star, \bm{x}_o^k\triangleq \bm{x}^k-\bm{x}^\star, \bm{u}_o^k\triangleq \bm{u}^k-\bm{u}_b^\star$,
where $(\bm{x}^{\star},\bm{w}^{\star},\bm{u}_b^\star)$ satisfies the optimal condition \eqref{KKT-Condition} and $\bm{u}_b^\star\in\m{span}(\sqrt{\bm{B}})$. Letting $\bm{z}^k \triangleq \bm{w}^k-\sqrt{\bm{B}}\bm{u}^k, \bm{z}^\star \triangleq \bm{w}^\star-\sqrt{\bm{B}}\bm{u}_b^\star, \bm{z}_o^k\triangleq \bm{z}^k-\bm{z}^\star$, with Assumption \ref{MixingMatrix-2}, it gives that
$\bm{B}\bm{z}^\star = \bm{0} \Longleftrightarrow \sqrt{\bm{B}}\bm{z}^\star = \bm{0}$.
Thus, by \eqref{KKT-Condition} and \eqref{Framework-UP-2}, we can reach the error recursions:
\begin{subequations}\label{Framework-UP-3}
\begin{align}
  \bm{z}_o^k &= \bm{w}_o^k-\sqrt{\bm{B}}\bm{u}_o^k, \label{Framework-UP-3-1}\\
  \bm{x}_o^{k+1} & = \mathrm{prox}_{\alpha R}(\bm{A}_k\bm{z}^k)-\mathrm{prox}_{\alpha R}(\bm{A}\bm{z}^\star), \\
  \bm{u}_o^{k+1} & = \bm{u}_o^k + p \sqrt{\bm{B}_k}\bm{z}^k-\sqrt{\bm{B}}\bm{z}^\star.
\end{align}
\end{subequations}
Note that $\bm{A}_k= \theta_k \bm{A} + (1-\theta_k)\bm{I}$, where $\theta_k\in\{0,1\}$, $\mathop{\m{Prob}}(\theta_k =1) = p$, and $\mathop{\m{Prob}}(\theta_k =0) = 1-p$. Taking the expectation over $\theta_k$, it gives that
\begin{align*}
&\mathbb{E}\left[\left\|\bm{x}_o^{k+1}\right\|^2|\theta_k\right]=p\left\|\mathrm{prox}_{\alpha R}(\bm{A}\bm{z}^k)-\mathrm{prox}_{\alpha R}(\bm{A}\bm{z}^\star)\right\|^2\\
&\quad+(1-p)\left\|\mathrm{prox}_{\alpha R}(\bm{z}^k)-\mathrm{prox}_{\alpha R}(\bm{A}\bm{z}^\star)\right\|^2.
\end{align*}
Since $\bm{B}\bm{z}^\star=\bm{0}$ and $\bm{A}\bm{1}=\bm{1}$, we have $\bm{A}\bm{z}^\star=\bm{z}^\star$. By the nonexpansivity of $\mathrm{prox}_{\alpha R}(\cdot)$, it gives that
\begin{align}\label{proof-thm1-eq1}
\mathbb{E}\left[\left\|\bm{x}_o^{k+1}\right\|^2|\theta_k\right]&\leq p\left\|\bm{A}\bm{z}_o^k\right\|^2+(1-p)\left\|\bm{z}_o^k\right\|^2\nonumber\\
&=\left\|\bm{z}_o^k\right\|^2_{p\bm{A}^2+(1-p)\bm{I}}\leq\left\|\bm{z}_o^k\right\|^2_{\bm{I}-p\bm{B}}.
\end{align}
The last inequality holds as $\bm{A}^2\preceq\bm{I}-\bm{B}$. Next, from \eqref{Framework-UP-3-1}, and expanding the squared norm of $\|\bm{z}_o^k\|_{\bm{I}-p\bm{B}}^2$ and collecting the terms, we can obtain that
\begin{align*}
\left\|\bm{z}_o^k\right\|_{\bm{I}-p\bm{B}}^2&=\left\|\bm{z}_o^k\right\|^2-\left\|\bm{z}_o^k\right\|_{p\bm{B}}^2\\
&=\left\|\bm{w}_o^k-\sqrt{\bm{B}}\bm{u}_o^k\right\|^2-\left\|\bm{z}_o^k\right\|_{p\bm{B}}^2\\
&=\left\|\bm{w}_o^k\right\|^2+\left\|\bm{u}_o^k\right\|_{\bm{B}}^2-2\left\langle\bm{w}_o^k,\sqrt{\bm{B}}\bm{u}_o^k\right\rangle-\left\|\bm{z}_o^k\right\|_{p\bm{B}}^2.
\end{align*}
Therefore, combining with \eqref{proof-thm1-eq1}, it holds that
\begin{align}\label{proof-thm1-eq2}
&\mathbb{E}\left[\left\|\bm{x}_o^{k+1}\right\|^2|\theta_k\right]\nonumber\\
&\leq\left\|\bm{w}_o^k\right\|^2+\left\|\bm{u}_o^k\right\|_{\bm{B}}^2-2\left\langle\bm{w}_o^k,\sqrt{\bm{B}}\bm{u}_o^k\right\rangle-\left\|\bm{z}_o^k\right\|_{p\bm{B}}^2.
\end{align}
Note that $\sqrt{\bm{B}_k}=\theta_k\sqrt{\bm{B}}$, where $\theta_k\in\{0,1\}$, $\mathop{\m{Prob}}(\theta_k =1) = p$. Taking the expectation over $\theta_k$, it gives that
\begin{align}\label{proof-thm1-eq3}
&\mathbb{E}\left[\left\|\bm{u}_o^{k+1}\right\|^2|\theta_k\right]=p\left\|\bm{u}_o^k+p\sqrt{\bm{B}}\bm{z}_o^k\right\|^2+(1-p)\left\|\bm{u}_o^k\right\|^2\nonumber\\
&=\left\|\bm{u}_o^k\right\|^2+p^2\left\|\bm{z}_o^k\right\|^2_{p\bm{B}}+2p^2\left\langle\bm{u}^k_o,\sqrt{\bm{B}}\bm{z}_o^k\right\rangle.
\end{align}
Since $\bm{z}_o^k = \bm{w}_o^k-\sqrt{\bm{B}}\bm{u}_o^k$, we have
\begin{align*}
\langle\bm{u}^k_o,\sqrt{\bm{B}}\bm{z}_o^k\rangle&=\langle\sqrt{\bm{B}}\bm{u}^k_o,\bm{w}_o^k-\sqrt{\bm{B}}\bm{u}_o^k\rangle\\
&=\langle\bm{w}_o^k,\sqrt{\bm{B}}\bm{u}^k_o\rangle-\|\bm{u}_o^k\|^2_{\bm{B}}.
\end{align*}
Thus, combining with \eqref{proof-thm1-eq3}, we have
\begin{align*}
\mathbb{E}[\|\bm{u}_o^{k+1}\|^2|\theta_k]=&\|\bm{u}_o^k\|^2+p^2\|\bm{z}_o^k\|^2_{p\bm{B}}\\
&+2p^2(\langle\bm{w}_o^k,\sqrt{\bm{B}}\bm{u}^k_o\rangle-\|\bm{u}_o^k\|^2_{\bm{B}}).
\end{align*}
Finally, combining with \eqref{proof-thm1-eq2}, it holds that
\begin{align*}
&\mathbb{E}\left[\left(\left\|\bm{x}_o^{k+1}\right\|^2+\nicefrac{1}{p^2}\left\|\bm{u}_o^{k+1}\right\|^2\right)|\theta_k\right]\\
&\leq \left\|\bm{w}_o^k\right\|^2+\left\|\bm{u}_o^k\right\|_{\bm{B}}^2-2\left\langle\bm{w}_o^k,\sqrt{\bm{B}}\bm{u}_o^k\right\rangle-\left\|\bm{z}_o^k\right\|_{p\bm{B}}^2\\
&\quad+\nicefrac{1}{p^2}\left\|\bm{u}_o^k\right\|^2+\left\|\bm{z}_o^k\right\|^2_{p\bm{B}}+2\left\langle\bm{w}_o^k,\sqrt{\bm{B}}\bm{u}^k_o\right\rangle-2\left\|\bm{u}_o^k\right\|^2_{\bm{B}}\\
&= \left\|\bm{w}_o^k\right\|^2+\nicefrac{1}{p^2}\left\|\bm{u}_o^k\right\|^2-\left\|\bm{u}_o^k\right\|^2_{\bm{B}}.
\end{align*}
Next, we consider the lower bound of $\left\|\bm{u}_o^k\right\|^2_{\bm{B}}$. From \eqref{Framework-UP-2-3}, it gives that, for $\bm{u}^0=\bm{0}$, $\bm{u}^k\in\m{span}(\sqrt{\bm{B}})$ for any $k\geq0$. Considering that $\bm{u}^\star_b\in\m{span}(\sqrt{\bm{B}})$. It holds that $\bm{u}_o^k\in\m{span}(\sqrt{\bm{B}})$ for any $k\geq0$. Therefore, it gives that
$\left\|\bm{u}_o^k\right\|^2_{\bm{B}}\geq\sigma_m(\bm{B})\left\|\bm{u}_o^k\right\|^2$, which implies that Lemma \ref{LEM-2} holds.
\hfill{$\square$}

\subsection{Sublinear convergence rate}
Using Lemmas \ref{LEM-1} and \ref{LEM-2}, we give the following theorem.
\begin{Theorem}\label{THM-1}
Under Assumptions \ref{ASS1}, \ref{MixingMatrix-1}, and \ref{MixingMatrix-2}, if $\alpha\in(0,\nicefrac{2}{L})$ and $p\in(0,1]$, then $\lim_{k\rightarrow\infty}\|\nabla F(\bm{x}^k)-\nabla F(\bm{x}^\star)\|^2=0$ and $\lim_{k\rightarrow\infty}\|\bm{u}^k-\bm{u}_b^\star\|^2=0$, almost surely and in quadratic mean. In addition, letting $\bar{\bm{X}}^{K}=\frac{1}{K}\sum_{k=0}^{K-1}\bm{x}^k$ and $\bar{\bm{U}}^{K}=\frac{1}{K}\sum_{k=0}^{K-1}\bm{u}^k$, it holds that
\begin{align*}
\mathbb{E}\left[\left\|\nabla F(\bar{\bm{X}}^{K})-\nabla F(\bm{x}^\star)\right\|^2
+\left\|\bar{\bm{U}}^K-\bm{u}^\star_b\right\|^2
\right]=\mathcal{O}\left(\frac{1}{K}\right).
\end{align*}
\end{Theorem}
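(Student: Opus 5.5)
The plan is to combine Lemma~\ref{LEM-2} with a co-coercivity bound on the adaptation step, and then run a supermartingale/telescoping argument. First, bound $\|\bm{w}^k-\bm{w}^\star\|^2$. From \eqref{Framework-UP-2} and \eqref{KKT-Condition-1}, $\bm{w}_o^k=\bm{x}_o^k-\alpha(\nabla F(\bm{x}^k)-\nabla F(\bm{x}^\star))$. Since each $f_i$ is convex and $L$-smooth (Assumption~\ref{ASS1}, with $\mu\ge 0$), the Baillon--Haddad co-coercivity inequality $\langle \bm{x}_o^k,\nabla F(\bm{x}^k)-\nabla F(\bm{x}^\star)\rangle\ge \frac1L\|\nabla F(\bm{x}^k)-\nabla F(\bm{x}^\star)\|^2$ gives
\begin{align*}
\|\bm{w}_o^k\|^2\le \|\bm{x}_o^k\|^2-\alpha\Big(\tfrac{2}{L}-\alpha\Big)\|\nabla F(\bm{x}^k)-\nabla F(\bm{x}^\star)\|^2 .
\end{align*}
Define the Lyapunov function $V^k\triangleq\|\bm{x}_o^k\|^2+\frac{1}{p^2}\|\bm{u}_o^k\|^2$ and set $c\triangleq\alpha(2/L-\alpha)>0$, which is positive because $\alpha\in(0,2/L)$. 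Inserting the bound into \eqref{lem-1-eq} yields
\begin{align*}
\mathbb{E}[V^{k+1}\mid\mathcal{F}_k]\le V^k-c\|\nabla F(\bm{x}^k)-\nabla F(\bm{x}^\star)\|^2-\sigma_m(\bm{B})\|\bm{u}_o^k\|^2 ,
\end{align*}
where $\mathcal{F}_k$ is the history up to $\theta_{k-1}$. This uses the fact that $\bm{x}^k,\bm{u}^k$ are $\mathcal{F}_k$-measurable and $\theta_k$ is independent of $\mathcal{F}_k$, so the conditional expectation in Lemma~\ref{LEM-2} is exactly this one. Note also that $\sigma_m(\bm{B})>0$ by Assumption~\ref{MixingMatrix-2}.

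The asymptotic claims come next. By the Robbins--Siegmund supermartingale convergence theorem, $V^k$ converges almost surely and
\begin{align*}
\sum_k\Big(c\|\nabla F(\bm{x}^k)-\nabla F(\bm{x}^\star)\|^2+\sigma_m(\bm{B})\|\bm{u}_o^k\|^2\Big)<\infty \quad\text{a.s.}
\end{align*}
Hence both terms tend to zero almost surely. Taking total expectations and telescoping gives
\begin{align*}
\sum_{k=0}^{K-1}\mathbb{E}\Big[c\|\nabla F(\bm{x}^k)-\nabla F(\bm{x}^\star)\|^2+\sigma_m(\bm{B})\|\bm{u}_o^k\|^2\Big]\le V^0-\mathbb{E}V^K\le V^0 .
\end{align*}
This series therefore converges, so its terms go to zero, which is convergence in quadratic mean.

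For the ergodic $\mathcal{O}(1/K)$ rate, I would use convexity of the squared norm (Jensen) on the averages: $\|\bar{\bm{U}}^K-\bm{u}_b^\star\|^2\le\frac1K\sum_k\|\bm{u}_o^k\|^2$. For the gradient term, $\nabla F$ is not linear, so Jensen does not apply directly; this is the main obstacle. My first attempt is to pass through the Bregman divergence. Writing $D(\bm{x})\triangleq F(\bm{x})-F(\bm{x}^\star)-\langle\nabla F(\bm{x}^\star),\bm{x}-\bm{x}^\star\rangle$, this function is convex and $L$-smooth, so $\|\nabla F(\bm{x})-\nabla F(\bm{x}^\star)\|^2\le 2L\,D(\bm{x})$. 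Jensen then gives $D(\bar{\bm{X}}^K)\le\frac1K\sum_k D(\bm{x}^k)$.

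This requires a descent inequality on $D(\bm{x}^k)$ rather than on gradient norms. The cleaner way to get one is to split $\|\bm{x}_o^k-\alpha\,\delta^k\|^2$, with $\delta^k\triangleq\nabla F(\bm{x}^k)-\nabla F(\bm{x}^\star)$, using the refined co-coercivity $\langle \bm{x}_o^k,\delta^k\rangle\ge D(\bm{x}^k)+\frac{1}{2L}\|\delta^k\|^2$. This gives
\begin{align*}
\|\bm{w}_o^k\|^2\le\|\bm{x}_o^k\|^2-2\alpha D(\bm{x}^k)-\alpha\Big(\tfrac1L-\alpha\Big)\|\delta^k\|^2 .
\end{align*}
For $\alpha\le 1/L$ this directly yields $\sum_k\mathbb{E} D(\bm{x}^k)\le V^0/(2\alpha)$. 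For $\alpha\in(1/L,2/L)$, I would instead take a convex combination of the two inequalities, weighting the co-coercive one by $\lambda$ and the Bregman one by $1-\lambda$, to keep a positive multiple of $D(\bm{x}^k)$ while absorbing the negative gradient term. The combined coefficient on $\|\delta^k\|^2$ is
\begin{align*}
\lambda\alpha\Big(\tfrac{2}{L}-\alpha\Big)+(1-\lambda)\alpha\Big(\tfrac1L-\alpha\Big),
\end{align*}
which is nonnegative for $\lambda$ close to $1$. Dividing the resulting bound by $K$ then gives
\begin{align*}
\mathbb{E}\|\nabla F(\bar{\bm{X}}^K)-\nabla F(\bm{x}^\star)\|^2\le \frac{2L}{K}\sum_k\mathbb{E}D(\bm{x}^k)=\mathcal{O}(V^0/K),
\end{align*}
and combining with the $\bar{\bm{U}}^K$ bound completes the proof.
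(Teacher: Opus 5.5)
Your proof is correct. For the almost-sure and mean-square claims it matches the paper: the same co-coercivity bound $\|\bm{w}_o^k\|^2\le\|\bm{x}_o^k\|^2-\alpha(\nicefrac{2}{L}-\alpha)\|\nabla F(\bm{x}^k)-\nabla F(\bm{x}^\star)\|^2$ fed into Lemma~\ref{LEM-2}, the same Lyapunov function (your $V^k$ is the paper's $\Phi^k$), Robbins--Siegmund, and telescoping of total expectations.

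You depart from the paper only on the ergodic rate, and there your route is the sound one. The paper handles it in one line (``since $\|\cdot\|^2$ is convex'') and asserts $\mathbb{E}[\|\nabla F(\bar{\bm{X}}^K)-\nabla F(\bm{x}^\star)\|^2+\|\bar{\bm{U}}^K-\bm{u}_b^\star\|^2]\le\Phi^0/(\varrho K)$. That Jensen step is fine for $\bar{\bm{U}}^K$. For the gradient term, however, it only controls $\|\frac{1}{K}\sum_k\nabla F(\bm{x}^k)-\nabla F(\bm{x}^\star)\|^2$, which is not $\|\nabla F(\bar{\bm{X}}^K)-\nabla F(\bm{x}^\star)\|^2$ because $\nabla F$ is nonlinear. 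Indeed $\bm{x}\mapsto\|\nabla F(\bm{x})-\nabla F(\bm{x}^\star)\|^2$ need not be convex: with $d=1$, $f_i=\log\cosh$ and $r=0$, one has $x^\star=0$ and the map is $\sum_i\tanh^2 x_i$.

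You flagged this obstacle correctly, and your Bregman detour supplies the missing justification. It uses three facts: $D$ is convex, $\|\nabla D\|^2\le 2LD$, and $\langle\bm{x}_o^k,\delta^k\rangle\ge D(\bm{x}^k)+\frac{1}{2L}\|\delta^k\|^2$. To make your convex combination explicit, note that the coefficient on $\|\delta^k\|^2$ is $\alpha(\frac{1+\lambda}{L}-\alpha)$. So $\lambda=\max\{0,\alpha L-1\}$ works and gives $\|\bm{w}_o^k\|^2\le\|\bm{x}_o^k\|^2-2\alpha\min\{1,2-\alpha L\}D(\bm{x}^k)$. This yields the bound $\frac{LV^0}{\alpha\min\{1,2-\alpha L\}K}+\frac{V^0}{\sigma_m(\bm{B})K}$, which degenerates as $\alpha\to\nicefrac{2}{L}$, just as the paper's $1/\varrho$ does. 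The paper's version buys only brevity; yours gives a valid proof of the stated $\mathcal{O}(\nicefrac{1}{K})$ claim at the cost of one extra smoothness inequality.
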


{\it Proof}:
From the definition of $\bm{w}^k_o$, we have
\begin{align*}
\left\|\bm{w}^k_o\right\|^2&=\left\|\bm{x}_o^k\right\|^2+\alpha^2\|\nabla F(\bm{x}^k)-\nabla F(\bm{x}^\star)\|^2\\
&\quad-2\alpha\left\langle\bm{x}_o^k,\nabla F(\bm{x}^k)-\nabla F(\bm{x}^\star)\right\rangle\nonumber\\
&\leq\left\|\bm{x}_o^k\right\|^2-\alpha(\nicefrac{2}{L}-\alpha)\|\nabla F(\bm{x}^k)-\nabla F(\bm{x}^\star)\|^2,
\end{align*}
where the second inequality follows from the convexity and smoothness of $f_i$, which is equivalent to
$\langle\bm{x}_o^k,\nabla F(\bm{x}^k)-\nabla F(\bm{x}^\star)\rangle\geq\nicefrac{1}{L}\|\nabla F(\bm{x}^k)-\nabla F(\bm{x}^\star)\|^2$.
Let
\begin{align*}
\Phi^k&\triangleq\left\|\bm{x}^{k}-\bm{x}^\star\right\|^2+\nicefrac{1}{p^2}\left\|\bm{u}^{k}-\bm{u}_b^\star\right\|^2,\\
\Psi^k& \triangleq \left\|\nabla F(\bm{x}^k)-\nabla F(\bm{x}^\star)\right\|^2 + \left\|\bm{u}^k-\bm{u}^\star_b\right\|^2.
\end{align*}
From \eqref{lem-1-eq}, one has
\begin{align}\label{newproof-eq1}
\mathbb{E}[\Phi^{k+1}|\theta_k]\leq &\Phi^k-\alpha(\nicefrac{2}{L}-\alpha)\|\nabla F(\bm{x}^k)-\nabla F(\bm{x}^\star)\|^2\nonumber\\
& - \sigma_m(\bm{B})\|\bm{u}^k-\bm{u}^\star_b\|^2\nonumber\\
\leq&\Phi^k -\varrho \Psi^k,
\end{align}
where $\varrho=\min\{\alpha(\nicefrac{2}{L}-\alpha),\sigma_m(\bm{B})\}$.
Using classical results on almost supermartingale convergence theorem \cite[Proposition A.4.5, pp. 464]{BOOK2015} \cite{Robbins1971}, it follows from \eqref{newproof-eq1} that $\Phi^{k}$ converges almost surely to a  random variable $\Phi^\infty$ and that
$\sum_{k=0}^{\infty}\Psi^k<\infty$ almost surely. Thus,
$\lim_{k\rightarrow\infty}\Psi^k=0$ almost surely, i.e.,
\begin{align*}
&\lim_{k\rightarrow\infty}\|\nabla F(\bm{x}^k)-\nabla F(\bm{x}^\star)\|^2=0, \text{ almost surely},\\
&\lim_{k\rightarrow\infty}\|\bm{u}^k-\bm{u}_b^\star\|^2=0, \text{ almost surely}.
\end{align*}
Moreover, taking full expectation in \eqref{newproof-eq1}, it gives that
\begin{align}\label{PROOF-THM1-EQ1}
&\varrho\mathbb{E}\left[\Psi^{k}\right]\leq \mathbb{E}\left[\Phi^{k}\right] - \mathbb{E}\left[\Phi^{k+1}\right].
\end{align}
Summing the inequality \eqref{PROOF-THM1-EQ1} over $k=0,1,\cdots,\infty$, we have
$\sum_{k=0}^{\infty}\mathbb{E}\left[\Psi^{k}\right]\leq \nicefrac{1}{\varrho}\Phi^0<\infty$.
Therefore,
$\lim_{k\rightarrow\infty}\mathbb{E}\left[\Psi^{k}\right]=0$,
which implies that $\lim_{k\rightarrow\infty}\Psi^{k}=0$ in quadratic mean, i.e.,
\begin{align*}
\lim_{k\rightarrow\infty}\|\nabla F(\bm{x}^k)-\nabla F(\bm{x}^\star)\|^2=0,\
\lim_{k\rightarrow\infty}\|\bm{u}^k-\bm{u}_b^\star\|^2=0
\end{align*}
in quadratic mean. Since $\|\cdot\|^2$ is convex, summing \eqref{PROOF-THM1-EQ1} over $k=0,1,\cdots,K-1$, it gives that
\begin{align*}
\mathbb{E}\left[\left\|\nabla F(\bar{\bm{X}}^{K})-\nabla F(\bm{x}^\star)\right\|^2 +
                 \left\|\bar{\bm{U}}^K-\bm{u}^\star_b\right\|^2
\right]\leq \frac{\Phi^0}{\varrho K},
\end{align*}
where $\bar{\bm{X}}^{K}=\frac{1}{K}\sum_{k=0}^{K-1}\bm{x}^k$ and $\bar{\bm{U}}^{K}=\frac{1}{K}\sum_{k=0}^{K-1}\bm{u}^k$.
\hfill{$\square$}

With $\mu=0$, Theorem \ref{THM-1} shows that FlexATC is sublinear convergence. However, achieving communication acceleration requires assuming $\mu>0$. In the next two subsections, we will give a more detailed analysis.

\subsection{Linear convergence rate}

In the strongly convex setting, applying Lemmas \ref{LEM-1} and \ref{LEM-2}, we present the following theorem to demonstrate that FlexATC can achieve linear convergence.
\begin{Theorem}\label{THM-2}
Under Assumptions \ref{ASS1}, \ref{MixingMatrix-1}, and \ref{MixingMatrix-2}, if $\mu>0$, $\alpha\in(0,\nicefrac{2}{L})$ and $p\in(0,1]$, then it holds that
\begin{align*}
\mathbb{E}\left[\left\|\bm{x}^{k}-\bm{x}^\star\right\|^2\right]\leq\left(\left\|\bm{x}^{0}-\bm{x}^\star\right\|^2+\nicefrac{1}{p^2}\left\|\bm{u}_b^\star\right\|^2\right) \zeta^{k},
\end{align*}
with
$\zeta=\max\{(1-\alpha L)^2,(1-\alpha \mu)^2,1-p^2\sigma_m(\bm{B})\}<1$. Moreover, $\{(\bm{x}^k,\bm{u}^k)\}_{k\geq0}$ converges to $(\bm{x}^\star,\bm{u}_b^\star)$ almost surely.
\end{Theorem}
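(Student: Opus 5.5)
The plan is to reuse the one-step contraction of Lemma \ref{LEM-2} and replace the convexity-based bound on $\|\bm{w}^k-\bm{w}^\star\|^2$ from the proof of Theorem \ref{THM-1} with a strongly convex one. Define the Lyapunov function $\Phi^k=\|\bm{x}^k-\bm{x}^\star\|^2+\nicefrac{1}{p^2}\|\bm{u}^k-\bm{u}_b^\star\|^2$ as before.

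\textbf{Step 1: contraction of the gradient step.} For $\mu$-strongly convex, $L$-smooth $f_i$, the map $\bm{x}\mapsto\bm{x}-\alpha\nabla F(\bm{x})$ is Lipschitz with constant $\max\{|1-\alpha L|,|1-\alpha\mu|\}$. To see this, write
\[
\bm{w}^k-\bm{w}^\star=\left(\bm{I}-\alpha\bm{H}_k\right)(\bm{x}^k-\bm{x}^\star),
\qquad
\bm{H}_k=\int_0^1\nabla^2F\bigl(\bm{x}^\star+t(\bm{x}^k-\bm{x}^\star)\bigr)\,dt,
\]
where $\mu\bm{I}\preceq\bm{H}_k\preceq L\bm{I}$. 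If twice differentiability is not assumed, the same bound follows from the co-coercivity inequality for $\nabla F-\mu\,\mathrm{Id}$. Either way,
\[
\|\bm{w}^k-\bm{w}^\star\|^2\le\max\{(1-\alpha L)^2,(1-\alpha\mu)^2\}\,\|\bm{x}^k-\bm{x}^\star\|^2 .
\]
The condition $\alpha\in(0,\nicefrac{2}{L})$ together with $0<\mu\le L$ guarantees that both squared factors are strictly less than $1$.

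\textbf{Step 2: one-step contraction of $\Phi$.} Substituting Step 1 into \eqref{lem-1-eq} gives
\[
\mathbb{E}[\Phi^{k+1}\mid\theta_k]\le\zeta\,\Phi^k .
\]
Here we use that $1-p^2\sigma_m(\bm{B})\in[0,1)$. It is nonnegative because $\sigma_m(\bm{B})\le1$, which follows from $\bm{B}\preceq\bm{I}-\bm{A}^2\preceq\bm{I}$. It is strictly less than $1$ because $\sigma_m(\bm{B})>0$. Hence $\zeta<1$.

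\textbf{Step 3: unrolling and the stated bound.} The quantities $\bm{x}^k,\bm{u}^k$ depend only on $\theta_0,\dots,\theta_{k-1}$, and $\theta_k$ is independent of these. So the conditional expectation in Step 2 is really an expectation over $\theta_k$ given the past. Taking full expectations and using the tower property yields $\mathbb{E}[\Phi^k]\le\zeta^k\Phi^0$. Since $\bm{u}^0=\bm{0}$, we have $\Phi^0=\|\bm{x}^0-\bm{x}^\star\|^2+\nicefrac{1}{p^2}\|\bm{u}_b^\star\|^2$. Finally $\|\bm{x}^k-\bm{x}^\star\|^2\le\Phi^k$, which gives the claimed inequality.

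\textbf{Step 4: almost sure convergence.} Summing the bound gives $\sum_k\mathbb{E}[\Phi^k]\le\Phi^0/(1-\zeta)<\infty$. By monotone convergence, $\mathbb{E}\bigl[\sum_k\Phi^k\bigr]<\infty$, so $\sum_k\Phi^k<\infty$ almost surely. Therefore $\Phi^k\to0$ almost surely. Alternatively, this follows from Borel--Cantelli via Markov's inequality, or from the supermartingale argument used in Theorem \ref{THM-1}. In any case both $\bm{x}^k\to\bm{x}^\star$ and $\bm{u}^k\to\bm{u}_b^\star$ almost surely.

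The main point requiring care is Step 1: the sharp constant $\max\{(1-\alpha L)^2,(1-\alpha\mu)^2\}$ must be derived under first-order assumptions only. I would first try the Hessian-integral argument. If needed, I would fall back on the inequality
\[
\langle\nabla F(\bm{x})-\nabla F(\bm{y}),\bm{x}-\bm{y}\rangle\ge\frac{\mu L}{\mu+L}\|\bm{x}-\bm{y}\|^2+\frac{1}{\mu+L}\|\nabla F(\bm{x})-\nabla F(\bm{y})\|^2 .
\]
Combined with a case split on $\alpha$ relative to $\nicefrac{2}{(\mu+L)}$, this gives the same constant. A secondary check is measurability: the iterates must be adapted to the past coin flips so that the conditioning in Lemma \ref{LEM-2} can be chained.
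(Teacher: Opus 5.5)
Your proposal is correct and follows the paper's proof. The paper bounds $\|\bm{w}^k-\bm{w}^\star\|^2$ via the co-coercivity inequality with a case split at $\alpha=\nicefrac{2}{(L+\mu)}$, which is your fallback and the one to commit to, since Assumption \ref{ASS1} gives no twice differentiability for the Hessian-integral route; it then plugs this into Lemma \ref{LEM-2} and unrolls, and your almost-sure argument via $\sum_k\mathbb{E}[\Phi^k]<\infty$ is a valid, slightly more elementary substitute for the paper's appeal to the supermartingale convergence theorem.
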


{\it Proof}:
Let $D_F^k\triangleq\|\nabla F(\bm{x}^k)-\nabla F(\bm{x}^\star)\|^2$.
From the cocoercivity of $(\nabla F-\mu \bm{I})$ and \cite{BOOK2015}, we have
$$
\langle\nabla F(\bm{x}^k)-\nabla F(\bm{x}^\star),\bm{x}^k-\bm{x}^\star\rangle
\geq\frac{L \mu}{L+\mu}\|\bm{x}^k-\bm{x}^\star\|^2+\frac{1}{L+\mu}D_F^k.
$$
Thus, it gives that
\begin{align*}
\left\|\bm{w}^k_o\right\|^2
&\leq\left(1-\frac{2\alpha L \mu}{L+\mu}\right)\left\|\bm{x}^k-\bm{x}^\star\right\|^2+\left(\alpha^2-\frac{2\alpha}{L+\mu}\right)D_F^k.
\end{align*}
If $\alpha\leq\frac{2}{L+\mu}$, since
$D_F^k\geq\mu\|\bm{x}^k-\bm{x}^\star\|$, it gives that
\begin{align*}
\left\|\bm{w}^k_o\right\|^2
&\leq\left(1-\frac{2\alpha L \mu}{L+\mu}+\left(\alpha^2-\frac{2\alpha}{L+\mu}\right)\mu^2\right)\left\|\bm{x}^k-\bm{x}^\star\right\|^2\\
&=\left(1-\alpha\mu\right)^2\left\|\bm{x}^k-\bm{x}^\star\right\|^2.
\end{align*}
If $\alpha\geq\frac{2}{L+\mu}$, since
$D_F^k\leq L\|\bm{x}^k-\bm{x}^\star\|$, it gives that
\begin{align*}
\left\|\bm{w}^k_o\right\|^2
&\leq\left(1-\frac{2\alpha L \mu}{L+\mu}+\left(\alpha^2-\frac{2\alpha}{L+\mu}\right)L^2\right)\left\|\bm{x}^k-\bm{x}^\star\right\|^2\\
&=\left(1-\alpha L\right)^2\left\|\bm{x}^k-\bm{x}^\star\right\|^2.
\end{align*}
Therefore, we have $\|\bm{w}^k_o\|^2\leq\max\{(1-\alpha L)^2,(1-\alpha \mu)^2\}\|\bm{x}^k_o\|^2$.
Then, combining it with \eqref{lem-1-eq}, it holds that
\begin{align*}
&\mathbb{E}\left[\left(\left\|\bm{x}^{k+1}-\bm{x}^\star\right\|^2+\nicefrac{1}{p^2}\left\|\bm{u}^{k+1}-\bm{u}_b^\star\right\|^2\right)|\theta_k\right]\\
&\leq \max\{\left(1-\alpha L\right)^2,\left(1-\alpha \mu\right)^2\}\left\|\bm{x}^k-\bm{x}^\star\right\|^2\\
&\quad+\left(1-p^2\sigma_m(\bm{B})\right)\left(\nicefrac{1}{p^2}\left\|\bm{u}_o^k-\bm{u}^\star_b\right\|^2\right).
\end{align*}
Thus, it deduces that
$\mathbb{E}[\Phi^{k+1}|\theta_k]\leq \zeta\Phi^k$,
where $\zeta=\max\{(1-\alpha L)^2,(1-\alpha \mu)^2,1-p^2\sigma_m(\bm{B})\}$. Using classical results on almost supermartingale convergence theorem \cite[Proposition A.4.5, pp. 464]{BOOK2015} \cite{Robbins1971}, it holds that $\lim_{k\rightarrow\infty}\Phi^{k}=0$ almost surely. Almost sure convergence of $\{(\bm{x}^k,\bm{u}^k)\}_{k\geq0}$ follows. Taking full expectation and unrolling the recurrence, one has that
\begin{align*}
\mathbb{E}\left[\Phi^{k+1}\right]\leq \zeta^{k+1}\Phi^0\Rightarrow \mathbb{E}\left[\left\|\bm{x}^{k+1}-\bm{x}^\star\right\|^2\right]\leq\zeta^{k+1}\Phi^0.
\end{align*}
Thus, the proof is completed.
\hfill{$\square$}

Under the strongly convex setting, Theorem \ref{THM-2} offers a separated linear convergence rate of FlexATC, whose dependency on the agents' loss functions and the network topology is decoupled, and the term $\zeta_c=\max\{(1-\alpha L)^2,(1-\alpha \mu)^2\}$
matches the convergence rate exhibited by the proximal gradient algorithm \cite{ProxGD1,ProxGD2} when applied to the problem \eqref{EQ:Problem1}.

Next, we further analyze the effect of local updates on the convergence rate. We discuss the following two cases.

$\bullet$ \textbf{Case 1 -- when $\nicefrac{(1-\zeta_c)}{\sigma_m(\bm{B})}<1$}: In real-world scenarios, this condition holds true when $\kappa=\nicefrac{L}{\mu}$ is large (i.e., for ill-condition problems, which are precisely the ones that need acceleration). In this case, it can be verified that the convergence rate of FlexATC is
$$
\zeta\equiv\max\{(1-\alpha L)^2,(1-\alpha \mu)^2\}, \text{ when } p^2\in\left[\frac{1-\zeta_c}{\sigma_m(\bm{B})},1\right].
$$
Since the stepsize $\alpha=\mathcal{O}(\nicefrac{1}{L})$ is independent of the number of local updates $\nicefrac{1}{p}$, the convergence rate of FlexATC stays unaffected when $p$ is reduced from $1$ to $\sqrt{\nicefrac{(1-\zeta_c)}{\sigma_m(\bm{B})}}$ (i.e., the introduction of a local update mechanism will not cause any loss). This explains why we can often skip communication rounds without any negative impact on the convergence rate, essentially allowing us to ``skip" communication at no cost.

$\bullet$ \textbf{Case 2 -- when $\nicefrac{(1-\zeta_c)}{\sigma_m(\bm{B})} \geq 1$}: In real-world scenarios this condition holds true when $\kappa=\nicefrac{L}{\mu}$ is not too large. In this case, the convergence rate of FlexATC is
$\zeta = 1 - p^2 \sigma_m(\bm{B})$,
i.e., the optimal efficient approach is to perform communication at each iteration, corresponding to setting $p = 1$. In fact, for this well-conditioned problem ($\kappa$ is not large), additional acceleration is not needed because the improvement from $\kappa$ to $\sqrt{\kappa}$ is not significant.

Note that, due to the flexibility of FlexATC, we can make $\nicefrac{(1-\zeta_c)}{\sigma_m(\bm{B})}<1$ hold for any $\kappa$ by selecting $\bm{B}$. For example, similar as MG-ED \cite{Yuan2021} and MG-SONATA \cite{SONATA}, we can use muti-gossip communications to make $\mathcal{O}(\sigma_m(\bm{B}))=1$. In other words, combining local updates with multi-gossip can accelerate any situation.

\begin{table*}[!t]
\renewcommand\arraystretch{2}
\begin{center}
\caption{Comparison with existing strongly convex convergence rates of SOTA ATC-based distributed algorithms.\\ Iter. = Iteration, Comm. = Communication, Dec. = Deceleration, Acc. = Acceleration. }
\scalebox{0.66}{
\begin{tabular}{ccccccccccc}
\hline
\multirow{2}{*}{Algorithm} & \multirow{2}{*}{$\bm{B}$} &\multirow{2}{*}{ $\mathcal{O}\left(\sigma_m(\bm{B})\right)$ }&\multirow{2}{*}{ $N$ }&  \# of Iter.: $p=1$&\# of Iter.: $p\in\left[\sqrt{\frac{(1-\zeta_c)}{\sigma_m(\bm{B})}},1\right]$& No Iter. Dec.&  \# of Comm.: $p=1$&\# of Comm.: $p\in\left[\sqrt{\frac{(1-\zeta_c)}{\sigma_m(\bm{B})}},1\right]$& Acc. Comm.  \\
&&&&literature&\textbf{this paper}&\textbf{this paper}&literature&\textbf{this paper}&\textbf{this paper}\\
\hline
NIDS/ED/D2&$c(\bm{I}-\bm{W})$&$1-\rho$& $1$ & $\mathcal{O}\left(\frac{\kappa}{1-\rho}\log\nicefrac{1}{\epsilon}\right)$&$\mathcal{O}\left(\frac{\kappa}{1-\rho}\log\nicefrac{1}{\epsilon}\right)$& \textcolor[rgb]{0,0.6,0}{\Checkmark}& $\mathcal{O}\left(\frac{\kappa}{1-\rho}\log\nicefrac{1}{\epsilon}\right)$&$\mathcal{O}\left(\sqrt{\frac{\kappa}{1-\rho}}\log\nicefrac{1}{\epsilon}\right)$, $p=\frac{1}{\sqrt{\kappa(1-\rho)}}$&\textcolor[rgb]{0,0.6,0}{\Checkmark}\\
MG-ED&$\nicefrac{1}{2}(\bm{I}-\bm{W}^N)$&$1$ & $\mathcal{O}\left(\frac{1}{\sqrt{1-\rho}}\right)$&
$\mathcal{O}\left(\kappa\log\nicefrac{1}{\epsilon}\right)$&$\mathcal{O}\left(\kappa\log\nicefrac{1}{\epsilon}\right)$&\textcolor[rgb]{0,0.6,0}{\Checkmark}
&$\mathcal{O}\left(\frac{\kappa}{\sqrt{1-\rho}}\log\nicefrac{1}{\epsilon}\right)$&$\mathcal{O}\left(\sqrt{\frac{\kappa}{1-\rho}}\log\nicefrac{1}{\epsilon}\right)$, $p=\frac{1}{\sqrt{\kappa}}$&\textcolor[rgb]{0,0.6,0}{\Checkmark}\\
ATC-GT&$(\bm{I}-\bm{W})^2$  &$(1-\rho)^2$&$\mathcal{O}(1)$& $\mathcal{O}\left(\frac{\kappa}{(1-\rho)^2}\log\nicefrac{1}{\epsilon}\right)$&$\mathcal{O}\left(\frac{\kappa}{(1-\rho)^2}\log\nicefrac{1}{\epsilon}\right)$&\textcolor[rgb]{0,0.6,0}{\Checkmark}&
$\mathcal{O}\left(\frac{\kappa}{(1-\rho)^2}\log\nicefrac{1}{\epsilon}\right)$&$\mathcal{O}\left(\frac{\sqrt{\kappa}}{1-\rho}\log\nicefrac{1}{\epsilon}\right)$, $p=\frac{1}{\sqrt{\kappa(1-\rho)^2}}$&\textcolor[rgb]{0,0.6,0}{\Checkmark}\\
MG-SONATA& $(\bm{I}-\bm{W}^N)^2$  &$1$ & $\mathcal{O}\left(\frac{1}{\sqrt{1-\rho}}\right)$ &$\mathcal{O}\left(\kappa\log\nicefrac{1}{\epsilon}\right)$&$\mathcal{O}\left(\kappa\log\nicefrac{1}{\epsilon}\right)$& \textcolor[rgb]{0,0.6,0}{\Checkmark} &$\mathcal{O}\left(\frac{\kappa}{\sqrt{1-\rho}}\log\nicefrac{1}{\epsilon}\right)$&$\mathcal{O}\left(\sqrt{\frac{\kappa}{1-\rho}}\log\nicefrac{1}{\epsilon}\right)$, $p=\frac{1}{\sqrt{\kappa}}$&\textcolor[rgb]{0,0.6,0}{\Checkmark}\\
\hline
\end{tabular}}
\label{Table-complexity-2}
\end{center}
\end{table*}

\subsection{Communication complexity analysis}
In this subsection, we consider the communication complexity of FlexATC \eqref{Framework-UP} under strongly convexity. Note that
$\zeta_c=\max\{(1-\alpha L)^2,(1-\alpha \mu)^2\}\leq 1-\alpha \mu$.
It holds that
$\zeta\leq\max\{1-\alpha \mu,1-p^2\sigma_m(\bm{B})\}$.
Letting $\kappa=\nicefrac{L}{\mu}$ and setting $\alpha=\nicefrac{1}{L}$, we have
$\zeta\leq\max\{1-\nicefrac{1}{\kappa},1-p^2\sigma_m(\bm{B})\}$.
Thus, for the desired accuracy level $\epsilon>0$, when $\mathbb{E}[\|\bm{x}^{k+1}-\bm{x}^\star\|^2]\leq\epsilon \Phi^0$, the expected iteration complexity of FlexATC with $\mu>0$ is
$$
\text{\# of Iter.: } \mathcal{O}\left(\max\left\{\kappa,\frac{1}{p^2\sigma_m(\bm{B})}\right\}\log\nicefrac{1}{\epsilon}\right).
$$
When $\nicefrac{1}{\sqrt{\kappa\sigma_m(\bm{B})}}\leq1$, setting $p\in[\nicefrac{1}{\sqrt{\kappa\sigma_m(\bm{B})}},1]$, it holds that the iteration complexity of FlexATC is
$$
\text{\# of Iter.: } \mathcal{O}\left(\kappa\log\nicefrac{1}{\epsilon}\right), \text{\ {unchanged}}.
$$
Thus, the introduction of a local update mechanism will not affect the convergence rate. Given that communication occurs with probability $p$ in each iteration, the expected communication complexity is
\begin{align}\label{Comm} \text{\# of Comm.: } & \mathcal{O}\left(N\left(p\kappa+\frac{1}{p\sigma_m(\bm{B})}\right)\log\nicefrac{1}{\epsilon}\right),
\end{align}
where $N$ is the order of the polynomial functions $\bm{B}$ with respect to $\bm{W}$. If $\nicefrac{1}{\sqrt{\kappa\sigma_m(\bm{B})}}\leq1$, i.e., $\kappa\geq\nicefrac{1}{\sigma_m(\bm{B})}$, letting $p=1$ and $p=\nicefrac{1}{\sqrt{\kappa\sigma_m(\bm{B})}}$, respectively, from \eqref{Comm}, we have
\begin{align*}\left\{
\begin{array}{rl}
    p=1: &  \mathcal{O}\left(N\frac{\kappa}{\sigma_m(\bm{B})}\log\nicefrac{1}{\epsilon}\right),\\
    p=\frac{1}{\sqrt{\kappa\sigma_m(\bm{B})}}: & \mathcal{O}\left(N\sqrt{\frac{\kappa}{\sigma_m(\bm{B})}}\log\nicefrac{1}{\epsilon}\right), \text{\ {improved}}.
       \end{array}\right.
\end{align*}
Therefore, we conclude that local updates can accelerate the communication of ATC-based distributed algorithms without reducing convergence rates.

To elucidate this phenomenon further, we specialize FlexATC \eqref{Framework-UP} to NIDS/ED/D2 \cite{NIDS,ExactDiffusion,D2}, MG-ED \cite{Yuan2021}, ATC-GT \cite{AugDGM1,AugDGM2}, and MG-SONATA \cite{SONATA}, as presented in Table \ref{Table-complexity-2}. Our analysis reveals that not only local gradient steps demonstrably enhance the communication efficiency for NIDS/ED/D2, MG-ED, ATC-GT, and MG-SONATA, without increasing the number of iteration rounds, but they also enable NIDS/ED/D2, MG-ED, and MG-SONATA to achieve the optimal communication complexity, as established in \cite{Scaman2017}.

\begin{figure}[!t]
\centering
\subfigure{
\includegraphics[width=0.48\linewidth]{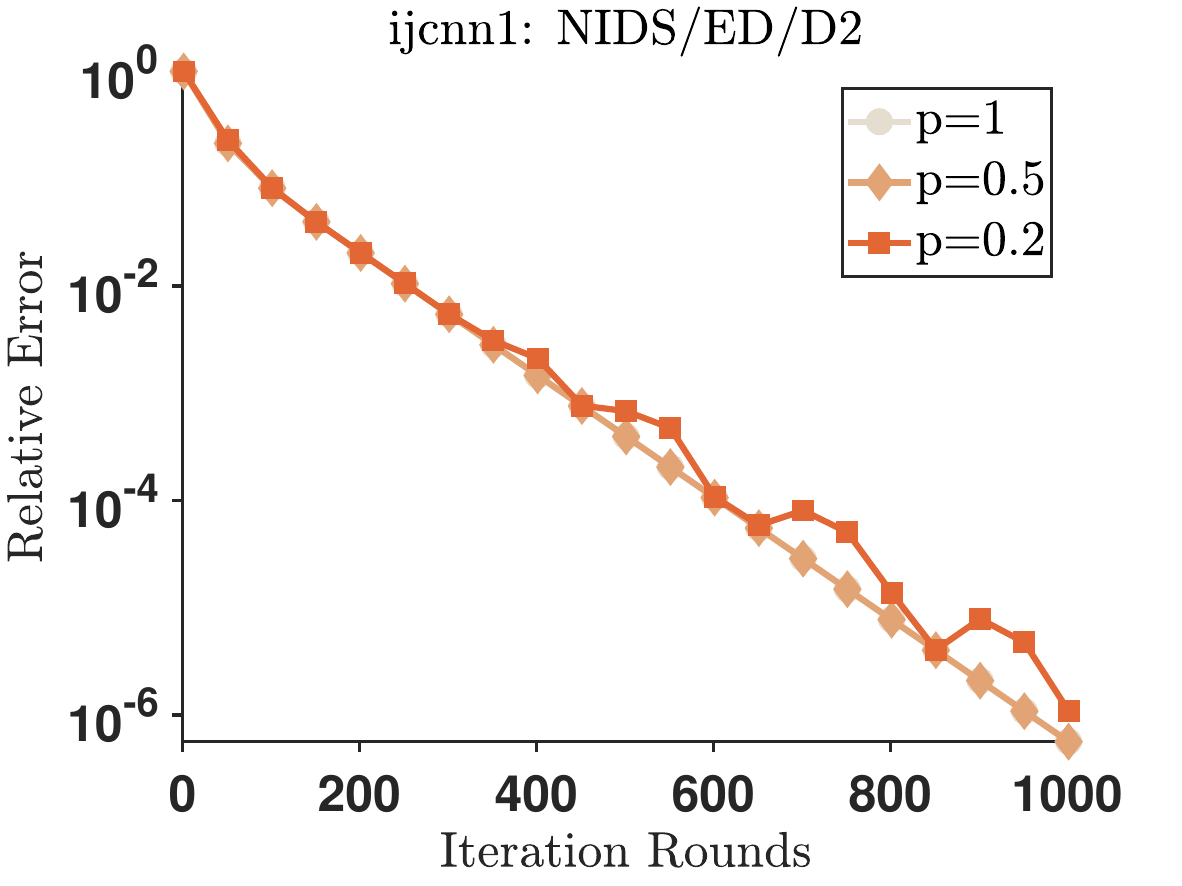}}\hspace{-8pt}
\subfigure{
\includegraphics[width=0.48\linewidth]{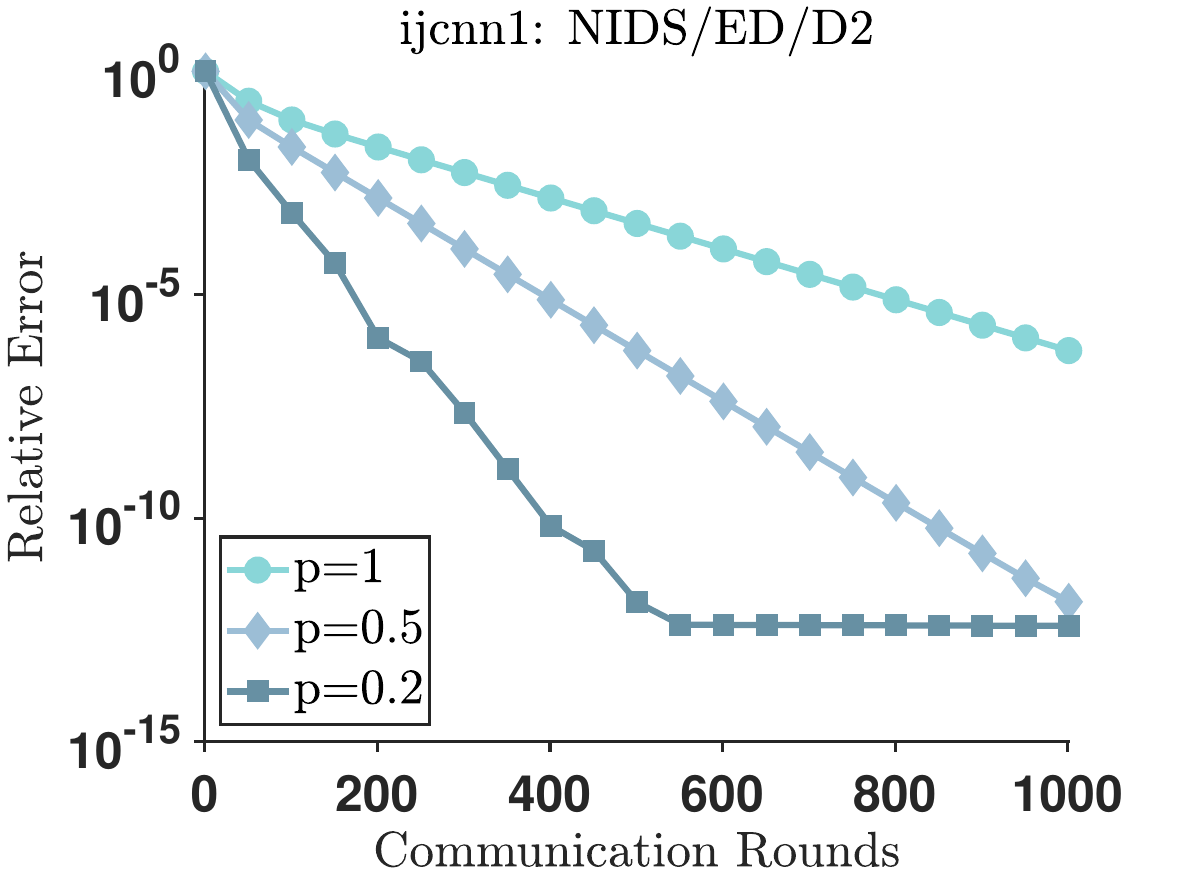}}\hspace{-8pt}
\subfigure{
\includegraphics[width=0.48\linewidth]{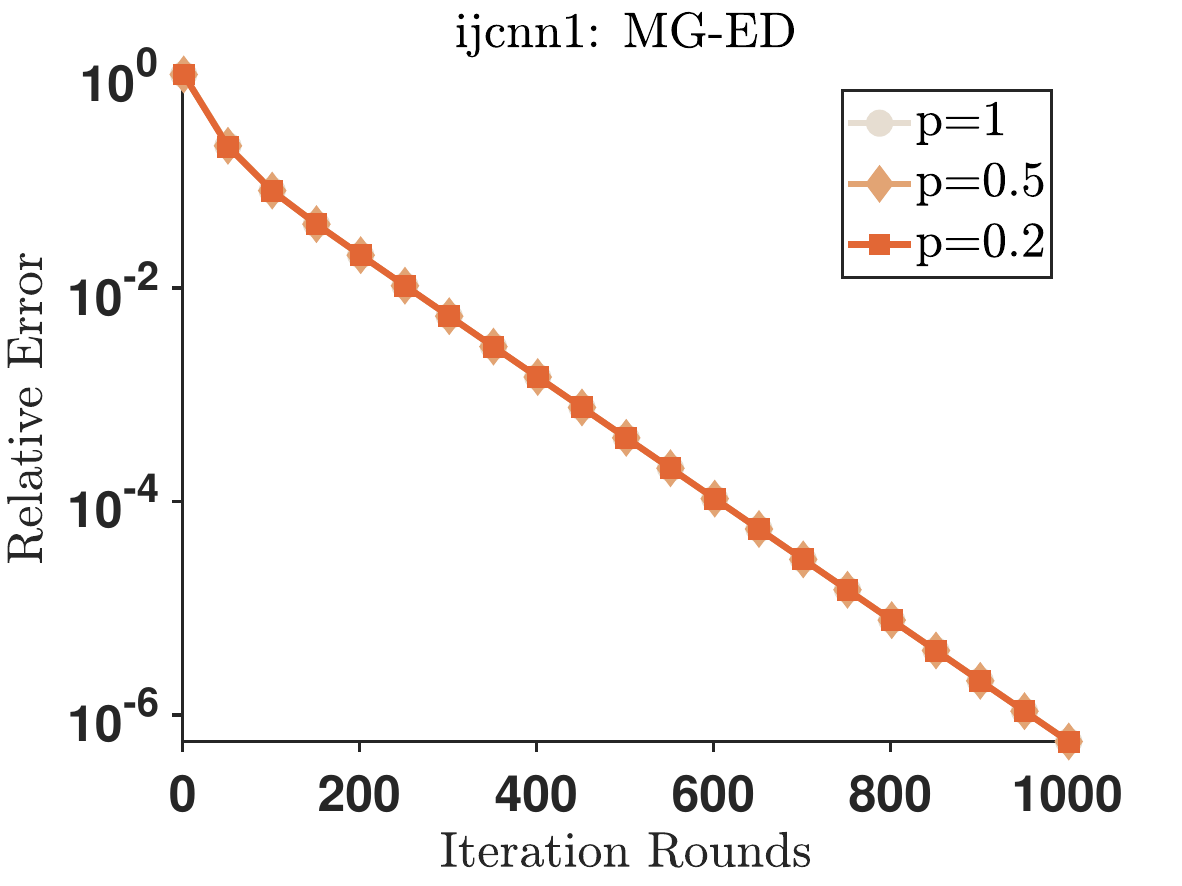}}\hspace{-8pt}
\subfigure{
\includegraphics[width=0.48\linewidth]{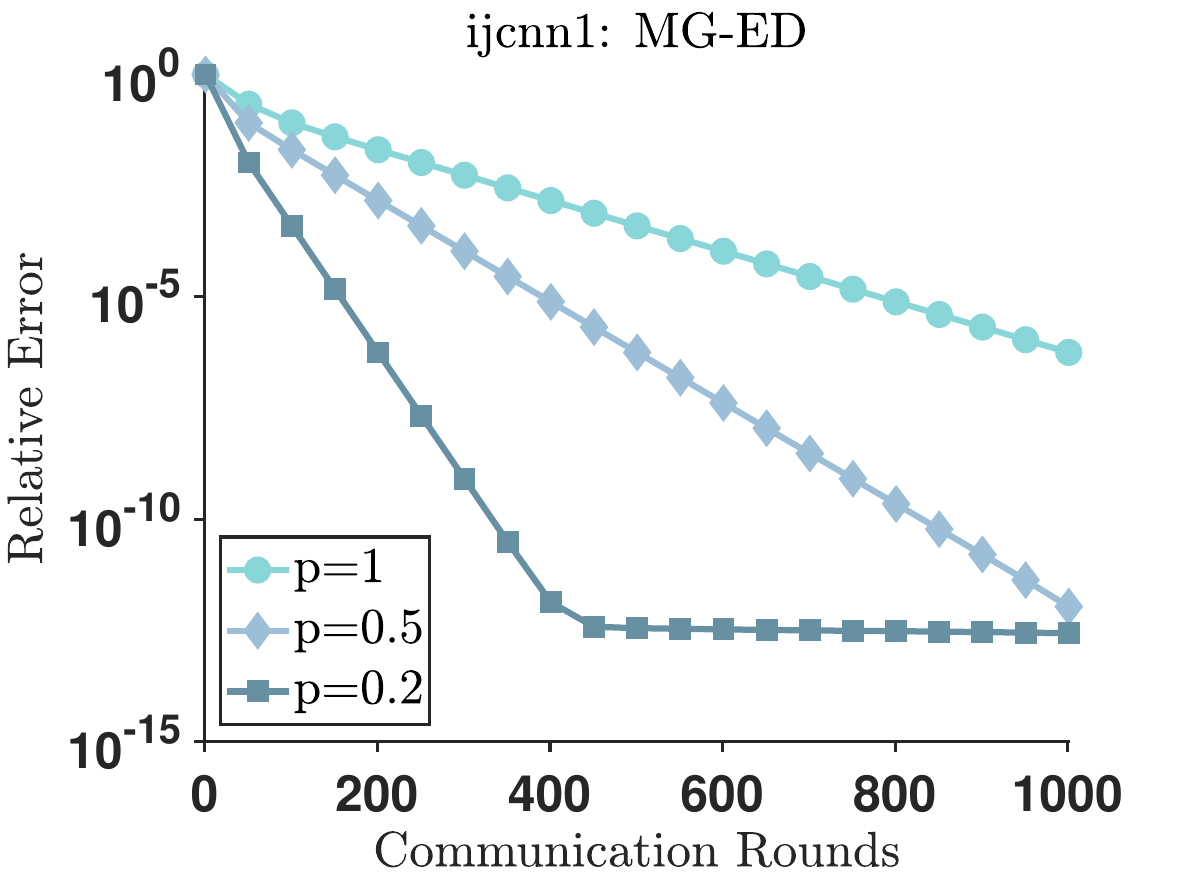}}
\subfigure{
\includegraphics[width=0.48\linewidth]{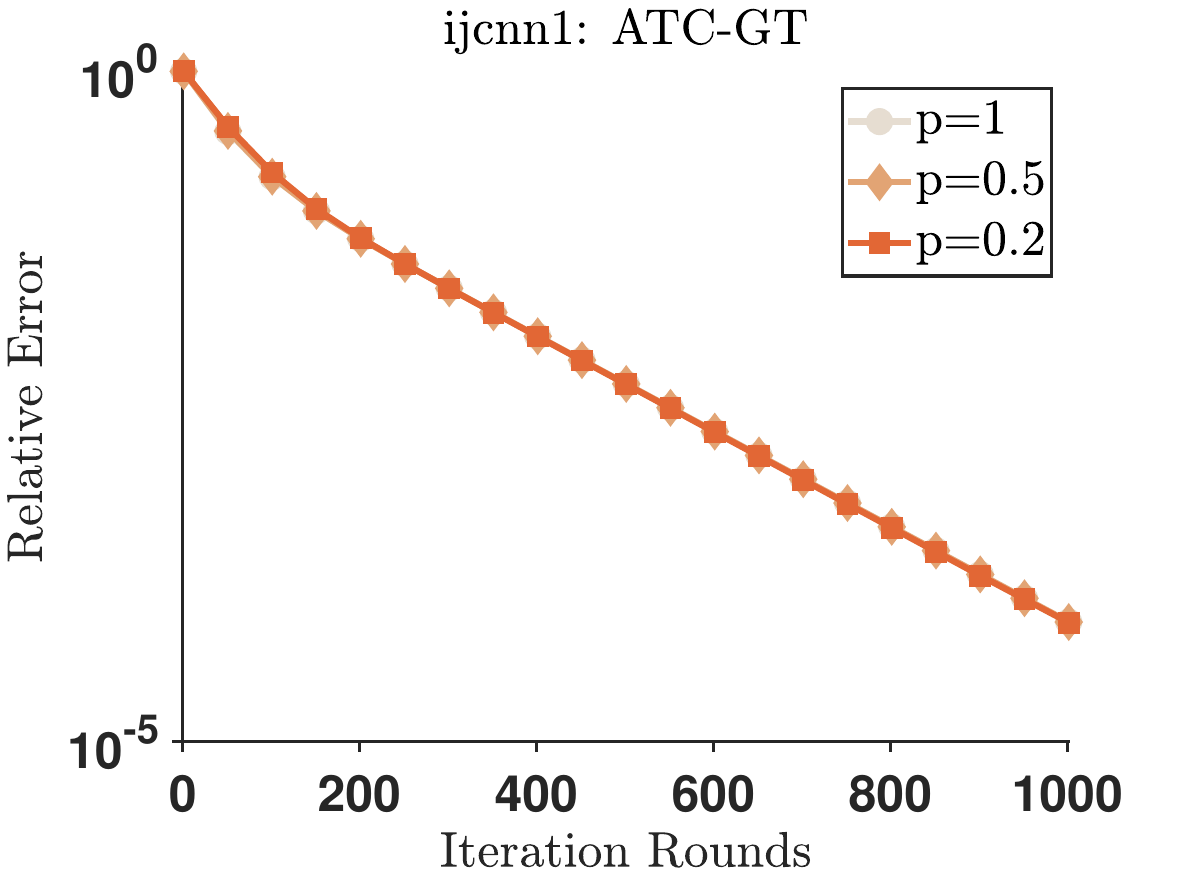}}\hspace{-8pt}
\subfigure{
\includegraphics[width=0.48\linewidth]{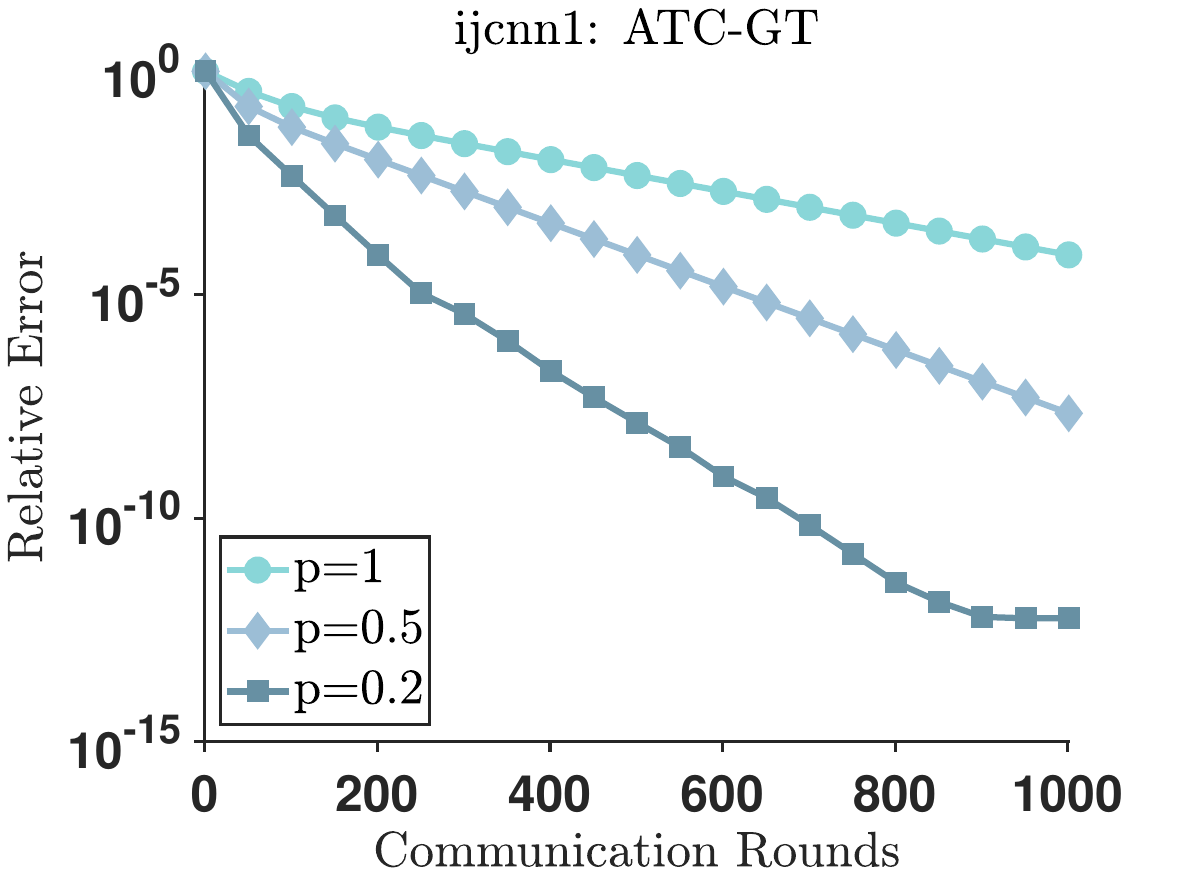}}\hspace{-8pt}
\subfigure{
\includegraphics[width=0.48\linewidth]{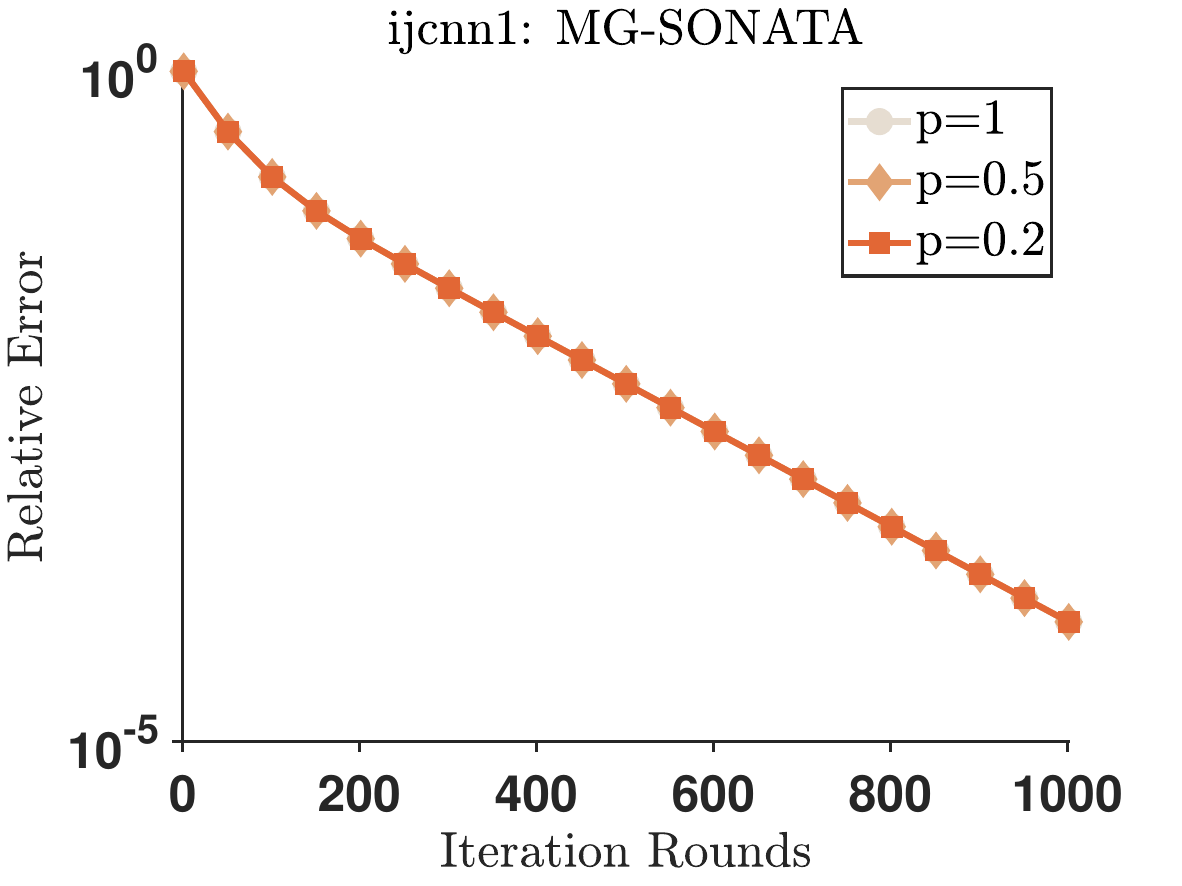}}\hspace{-8pt}
\subfigure{
\includegraphics[width=0.48\linewidth]{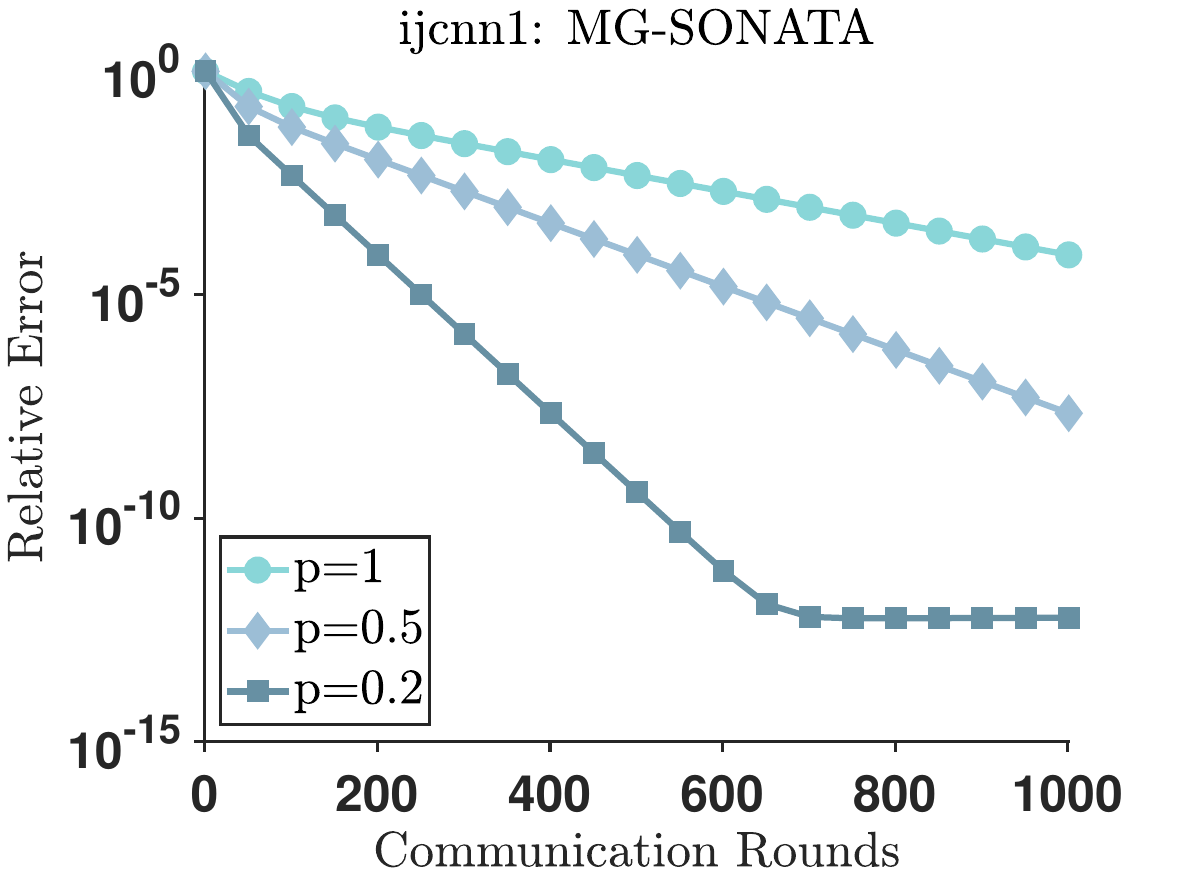}}
\caption{The numerical results over ijcnn1 dataset for communication acceleration are shown, with the relative error $\|\bm{x}^k-\bm{x}^\star\|/\|\bm{x}^\star\|$ plotted against the number of iteration and communication rounds.}
\label{Sim:case1}
\end{figure}

\section{Numerical experiment results}\label{SEC5}
In this numerical experiment, we validate the proposed framework and theoretical results using the real-world datasets ijcnn1 from the widely used LIBSVM \cite{LibSVM}. Choose the local loss function
$f_i(x)=\nicefrac{1}{m_i}\sum_{j=1}^{m_{i}} \ln (1+e^{-(\mathcal{X}_{i j}\tr {x})\mathcal{Y}_{i j}})+\nicefrac{\gamma_1}{2}\|x\|^2$
and the global loss function $r(x) =\gamma_2 \|x\|_1$. Set $\gamma_1=0.01$ and $\gamma_2=0.01$.
In this setup, each node holds its own local training data \((\mathcal{X}_{ij}, \mathcal{Y}_{ij}) \in \mathbb{R}^d \times \{-1, 1\}\), where \(j = 1, \dots, m_i\), consisting of feature vectors \(\mathcal{X}_{ij}\) and their corresponding binary class labels \(\mathcal{Y}_{ij}\). The data \((\mathcal{X}_{ij}, \mathcal{Y}_{ij}) \in \mathbb{R}^d \times \{-1, 1\}\) used in this experiment are sourced from ijcnn1. For the ijcnn1 dataset, the data dimensions are \((d, \sum_{i=1}^{n} m_i) = (22, 49950)\). The training samples are randomly and uniformly distributed across all \(n\) agents. The communication topology is set up as a random connected network with $50$ nodes, and the probability of a communication link between any two nodes is $0.1$.

We compare the iteration and communication performance of ATC-based algorithms, such as ED/NIDS/D2, MG-ED, ATC-GT, and MG-SONATA, under different values of $p$. The specific choices for $\bm{A}$ and $\bm{B}$ of these algorithms are shown in Table \ref{Table-relation}. Set the stepsize $\alpha=\nicefrac{1}{L}$. Fig. \ref{Sim:case1} shows that FlexATC with $p=1,0.5,0.2$ have the same iteration performance. It implies that skipping some communication will not affect the convergence rate. In addition, as shown in Fig. \ref{Sim:case1}, these ATC-based algorithms achieve communication acceleration through the parameter $p$. Through this experiment, we show that local updates can accelerate the communication of ATC-based algorithms.

\section{Conclusion}\label{SEC6}
We introduced a novel algorithmic framework called FlexATC featuring local updates for composite distributed optimization, which unifies numerous ATC-based algorithms. Leveraging this ATC framework, we conducted a unified convergence analysis and established both sublinear and linear convergence rates under general convexity and strong convexity, respectively. Importantly, the convergence stepsize condition of FlexATC remains independent of both the network topology and the number of local updates. Moreover, we demonstrated that the established linear convergence rate is decoupled, with the function component aligning with the centralized proximal gradient descent algorithm. Additionally, we provided the first theoretical evidence that probabilistic local updates significantly accelerate communication for ATC-based distributed algorithms.


\bibliographystyle{dcu}
\bibliography{autosam}


\end{document}